\theoremstyle{definition} \newtheorem{definition}{Definition}[section]
\theoremstyle{plain} \newtheorem{theorem}[definition]{Theorem}
\theoremstyle{plain} 
\theoremstyle{plain} \newtheorem{lemma}[definition]{Lemma}
\theoremstyle{plain} \newtheorem{proposition}[definition]{Proposition}
\theoremstyle{plain} \newtheorem{corollary}[definition]{Corollary}
\theoremstyle{plain} 
\theoremstyle{plain} 
\theoremstyle{plain} 
\theoremstyle{plain} 
\theoremstyle{remark} \newtheorem{remark}[definition]{Remark}
\newcommand{\mcC}{\mathcal{C}}
\newcommand{\mcL}{\mathcal{L}}
\newcommand{\mcP}{\mathcal{P}}
\newcommand{\wto}{{\widetilde O}}
\newcommand{\eps}{\varepsilon}
\newcommand{\ceil}[1]{\ensuremath{\left\lceil #1 \right\rceil}}
\newcommand{\mfX}{\mathfrak{X}}
\newcommand{\sr}{strongly regular}
\begin{document}

\title{Asymptotic Delsarte cliques in distance-regular graphs}

\author{L\'aszl\'o Babai}
\address{Departments of Computer Science and Mathematics \\ University of Chicago}
\email{laci@cs.uchicago.edu}

\author{John Wilmes}
\address{Department of Mathematics \\ University of Chicago}
\email{wilmesj@math.uchicago.edu}

\thanks{The 
first author was supported in part by NSF Grants CCF-1017781
and CCF-7443327. The
second author was supported in part by NSF Grant DGE-1144082.  The statements
made in the paper are those of the authors and do not necessarily reflect
the views of the NSF.}

\date{20 November 2014}

\maketitle

\begin{abstract}
We give a new bound on the parameter $\lambda$ (number of common 
neighbors of a pair of adjacent vertices) in a distance-regular
graph $G$, improving and generalizing bounds for strongly regular graphs
by Spielman (1996) and Pyber (2014). The new bound is one of the ingredients of
recent progress on the complexity of testing isomorphism of strongly regular
graphs (Babai, Chen, Sun, Teng, Wilmes 2013).  The proof is based on a
clique geometry found by Metsch (1991) under certain constraints on the
parameters.  We also give a simplified
proof of the following asymptotic consequence of Metsch's result: if 
$k\mu = o(\lambda^2)$ 
then each edge of $G$ belongs to a unique maximal clique of size
asymptotically equal to $\lambda$, 
and all other cliques have size $o(\lambda)$.  Here $k$ denotes
the degree and $\mu$ the number of common neighbors of a pair of vertices at
distance 2.    We point out that Metsch's cliques are ``asymptotically
Delsarte'' when $k\mu = o(\lambda^2)$, so families of distance-regular
graphs with parameters satisfying $k\mu = o(\lambda^2)$
are ``asymptotically Delsarte-geometric.'' 
\end{abstract}

\maketitle

\section{Introduction}

A graph is called \emph{amply regular} with parameters $(n,k,\lambda,\mu)$ if
it is $k$-regular on $n$ vertices, any two adjacent vertices have exactly
$\lambda$ common neighbors, and any two vertices at distance two from each
other have exactly $\mu$ common neighbors. Amply regular graphs have been
well-studied, as they generalize distance-regular graphs while preserving many
of their properties~\cite[Section 1.1]{bcn-drg}. Our first result gives a new
bound on $\lambda$.

In fact, our bound applies more generally to ``sub-amply regular'' graphs. We
say a graph is \emph{sub-amply regular} when it satisfies the weaker condition
that any two vertices at distance two from each other have \emph{at most} $\mu$
common neighbors.

\begin{theorem}\label{thm:lambda}
Let $G$ be a sub-amply regular graph with parameters 
$(n,k,\lambda,\mu)$ which is
not a disjoint union of cliques. Then
\begin{equation}\label{eq:lambda-explicit}
\lambda +1 < 
  \max\left\{4\sqrt{2n}, \frac{6}{\sqrt{13}-1}\sqrt{k(\mu-1)}\right\}.
\end{equation}
\end{theorem}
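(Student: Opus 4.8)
The plan is to argue by contradiction, with Metsch's clique geometry as the main tool. First reduce to the case that $G$ is connected and not a single clique: if $G$ is a disjoint union of cliques there is nothing to prove, and otherwise some connected component $G'$ of $G$ is not a clique; since $G$ is $k$-regular, $G'$ is again sub-amply regular with the same parameters $k,\lambda,\mu$ and with $n'\le n$ vertices, so a bound for $G'$ suffices. Assume then that $G$ is connected and not complete, and suppose, for contradiction, that $\lambda+1$ is at least \emph{both} terms on the right-hand side of \eqref{eq:lambda-explicit}. Squaring, and using $\frac{36}{(\sqrt{13}-1)^2}=\frac{7+\sqrt{13}}{2}$, this says precisely that $n\le(\lambda+1)^2/32$ and $(\lambda+1)^2\ge\frac{7+\sqrt{13}}{2}\,k(\mu-1)$. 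Since $k\ge\lambda+1$ always, the latter already forces $\mu-1\le\frac{2}{7+\sqrt{13}}(\lambda+1)$, so $\mu$ is a small fraction of $\lambda$; the first inequality will be used only at the very end.

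The inequality $(\lambda+1)^2\ge\frac{7+\sqrt{13}}{2}k(\mu-1)$ is calibrated so that the local parameters $k,\lambda,\mu$ meet the hypotheses of Metsch's clique-geometry theorem --- indeed, pinning down that hypothesis (a lower bound on $\lambda+1$ in terms of $\sqrt{k(\mu-1)}$) is exactly what produces the constant $\frac{6}{\sqrt{13}-1}$. Applying it, I obtain a family $\mcL$ of cliques of $G$ (``lines'') such that every edge of $G$ lies on exactly one line, distinct lines meet in at most one vertex, and every line has between $\lambda+2-\delta$ and $\lambda+2$ vertices, where the relative error $\delta/(\lambda+1)$ is of order $k(\mu-1)/(\lambda+1)^2$ and hence at most about $\frac{2}{7+\sqrt{13}}$ in our regime (when $\mu=1$ one may take $\delta=0$). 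A vertex lying on a single line would have that line as its whole connected component, which is excluded, so every vertex $u$ lies on some number $m_u\ge 2$ of lines; partitioning $N(u)$ by these lines (pairwise meeting only in $u$) gives $k=\sum_{L\ni u}(|L|-1)$, whence $m_u(\lambda+1-\delta)\le k\le m_u(\lambda+1)$. In particular $k\ge 2(\lambda+1-\delta)$, so $k-1-\lambda\ge\lambda+1-2\delta=\Omega(\lambda)$ and in fact $k-1-\lambda\ge\tfrac{k}{2}\bigl(1-O(\delta/\lambda)\bigr)$, and $2\le m_u\le\tfrac{k}{\lambda+1-\delta}$ for every $u$.

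The core is a double count of the set $P$ of ordered triples $(p,r,q)$ with $r$ adjacent to both $p$ and $q$ but $p\not\sim q$ (so $p\ne q$, and $\{p,q\}$ lies at distance $2$). On one hand, the neighbourhood graph of each vertex $r$ is $\lambda$-regular on $k$ vertices, so $r$ occurs in exactly $k(k-1-\lambda)$ such triples and $|P|=nk(k-1-\lambda)$. On the other hand, in the clique geometry a common neighbour of a distance-$2$ pair $\{p,q\}$ is precisely the unique common vertex of a line through $p$ and a line through $q$, and these lines are distinct (since $p\not\sim q$); hence $|N(p)\cap N(q)|\le m_pm_q\le(\max_u m_u)^2$ for every distance-$2$ pair. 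Summing over unordered distance-$2$ pairs, $|P|=2\sum |N(p)\cap N(q)|\le 2\binom{n}{2}(\max_u m_u)^2<n^2(\max_u m_u)^2$, so $k(k-1-\lambda)<n\,(\max_u m_u)^2$. (When $\mu=1$ this step is lossless: each distance-$2$ pair has a unique common neighbour, giving $k(k-1-\lambda)\le n-1$ directly; the general bound emulates this.)

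Finally, substitute $\max_u m_u\le\tfrac{k}{\lambda+1-\delta}$ and $k-1-\lambda\ge\tfrac{k}{2}\bigl(1-O(\delta/\lambda)\bigr)$ into $k(k-1-\lambda)<n(\max_u m_u)^2$ and cancel $k^2$: this yields $(\lambda+1-\delta)^2<2n\bigl(1+O(\delta/\lambda)\bigr)$, hence $(\lambda+1)^2<C\,n$ for an absolute constant $C$ which, tracking $\delta$ through $\delta/(\lambda+1)\lesssim\frac{2}{7+\sqrt{13}}$, comes out comfortably below $32$ --- contradicting $n\le(\lambda+1)^2/32$. I expect the main difficulty to lie in precisely this bookkeeping: one must verify Metsch's hypothesis with the right constant (the origin of $\frac{6}{\sqrt{13}-1}$), keep the line-size error $\delta$ --- and hence $\max_u m_u$ --- controlled tightly enough that the estimate $|N(p)\cap N(q)|\le m_pm_q$ is not too wasteful, and check the finitely many small graphs (where $\lambda$, and thus $n$, is below an absolute constant) by hand; the slack built into the constant $4\sqrt{2}$ is there to make all of this go through.
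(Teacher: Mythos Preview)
Your overall plan is sound and matches the paper's dichotomy: either $(\lambda+1)^2$ dominates $k(\mu-1)$ by enough that Metsch's clique geometry applies and a counting argument forces $\lambda+1 = O(\sqrt{n})$, or it doesn't and the $\sqrt{k(\mu-1)}$ bound follows directly. The reduction to a connected non-clique component, the use of Metsch (via the paper's Corollary on $(\lambda+1)^2 > (3k+\lambda+1)(\mu-1)$, which your hypothesis $(\lambda+1)^2\ge\frac{7+\sqrt{13}}{2}k(\mu-1)$ indeed implies since $\frac{7+\sqrt{13}}{2}>4$), and the estimates on line sizes and on $m_u$ are all correct. Tracking your constants carefully one gets roughly $(\lambda+1)^2 < 9n$, well under $32n$, so no appeal to ``small cases by hand'' is actually needed.

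Where you diverge from the paper is in the counting step that converts the clique geometry into $\lambda = O(\sqrt{n})$. The paper proves a short Fisher-type lemma: counting triples $(v,C_1,C_2)$ with $v\in C_1\cap C_2$ and $C_1\ne C_2$ gives $nr(r-1)<m^2\le (nR/\ell)^2$, whence $\ell \le (R/\sqrt{r(r-1)})\sqrt n$; plugging in $\ell>(\lambda+1)/2$, $R=2k/(\lambda+1)$, $r=\lceil k/(\lambda+1)\rceil\ge 2$ yields $\lambda+1<4\sqrt{2n}$ in two lines. You instead count induced paths of length two and use the nice geometric observation that $|N(p)\cap N(q)|\le m_p m_q$ for non-adjacent $p,q$, arriving at $k(k-1-\lambda)<n(\max_u m_u)^2$. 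Both routes work; the paper's is cleaner because it never needs the lower bound $k-\lambda-1\ge k/2-\delta$ or the max over $m_u$, while your path-count has the virtue of making the role of the geometry (bounding common neighbours combinatorially rather than via $\mu$) more explicit.

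One small correction: the constant $\frac{6}{\sqrt{13}-1}$ does \emph{not} arise, as you suggest, from ``pinning down Metsch's hypothesis'' --- any constant larger than $4$ would secure that. In the paper it comes from optimizing the split of the complementary case $(\lambda+1)^2\le(3k+\lambda+1)(\mu-1)$ into the sub-cases $\mu-1\gtrless\delta(\lambda+1)$; equating the resulting bounds $(1/\delta)\sqrt{k(\mu-1)}$ and $\sqrt{3/(1-\delta)}\sqrt{k(\mu-1)}$ gives $3\delta^2+\delta-1=0$, i.e.\ $\delta=(\sqrt{13}-1)/6$. This does not affect the validity of your argument, only the narrative about where the constant comes from.
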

Even in the very special case of strongly regular graphs, this result 
considerably improves the previously known bounds for $\lambda$
(Spielman~\cite{spielman} and Pyber~\cite{pyber}) 
in some ranges of the parameters.  (See Sec.~\ref{sec:srg}) for
a detailed comparison.)  The new bound was used in~\cite{bcstw-sr} to
improve Spielman's $\exp(\wto(n^{1/3}))$ bound on the complexity
of testing isomorphism of \sr\ graphs to $\exp(\wto(n^{1/5}))$ where 
the $\wto$ notation hides polylogarithmic ($(\log n)^C$) factors.  
This application was the key motivation of the present paper 
(see Sec.~\ref{sec:iso}). 

\subsection{Clique geometry}
We say that a collection $\mcC$ of cliques of a graph is 
a \emph{clique geometry} if (i) all cliques in $\mcC$
are maximal and (ii) every pair of adjacent vertices
of $G$ belongs to a unique member of $\mcC$.
We shall refer to the members of $\mcC$ as \emph{special cliques}.

Our result relies on the remarkable clique geometry appearing in
sub-amply regular graphs under certain constraints on the
parameters, discovered by Metsch~\cite{metsch1} (Theorem~\ref{thm:metsch}).
We observe in particular that Metsch's constraints are met when 
$k\mu/\lambda^2$ is small; furthermore, in this case, the
special cliques have nearly uniform order.
(The \emph{order} of a clique is the number of its vertices.)

Sub-amply regular graphs $G$ with $\mu\le 1$ trivially have a (unique)
clique geometry. 
When $\mu=0$, $G$ is a disjoint union of cliques of order $\lambda
+ 2 = 1 + k$. When $\mu=1$, the common neighbors of two adjacent
vertices form a clique. When $\mu=2$ and $k < (1/2)\lambda(\lambda+3)$, Brouwer
and Neumaier showed that again the common neighbors of any pair of adjacent
vertices form a clique~\cite{brouwer-neumaier}. In such graphs, every edge lies
in a unique maximal clique, and every maximal clique has order exactly $\lambda
+ 2$.  
A clique geometry exists under much more general conditions, as proved by
Metsch~\cite{metsch1}.

\begin{theorem}[{Metsch~\cite[Result 2.1]{metsch2}}]\label{thm:metsch}
Let $G$ be a sub-amply regular graph with parameters $(n,k,\lambda,\mu)$, and
let $t$ be an integer such that
\begin{align*}
    \lambda &> (2t-1)(\mu-1) - 1, \textrm{ and }\\
    k &< (t+1)(\lambda + 1) - \frac{1}{2}t(t+1)(\mu-1).
\end{align*}
Then the maximal cliques of order at least $\lambda + 2 - (t-1)(\mu-1)$
form a clique geometry, and
each vertex belongs to at most $t$ special cliques.
\end{theorem}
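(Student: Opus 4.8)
The plan is to follow Metsch's combinatorial strategy: attach to every edge $e$ of $G$ a canonical ``line'' $L(e)$, show it is a clique, and then establish (a) each $L(e)$ is a maximal clique of order at least $\lambda+2-(t-1)(\mu-1)$; (b) every edge lies in exactly one line; (c) every vertex lies in at most $t$ lines; and (d) conversely every maximal clique of order at least $\lambda+2-(t-1)(\mu-1)$ is a line. The family of all lines is then the asserted clique geometry. Concretely, put $\tau := \lambda-1-(t-1)(\mu-1)$; for an edge $\{u,v\}$, call a common neighbor $z$ of $u$ and $v$ \emph{$\{u,v\}$-good} if $z$ is adjacent to at least $\tau$ of the other $\lambda-1$ common neighbors of $u$ and $v$, and set $L(u,v):=\{u,v\}\cup\{z : z\text{ is }\{u,v\}\text{-good}\}$. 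The first hypothesis $\lambda>(2t-1)(\mu-1)-1$ says exactly that $\tau\ge t(\mu-1)-1$, and this is the slack that is exploited repeatedly below.

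The first substantive step is that each $L(u,v)$ is a clique. If $z_1,z_2\in L(u,v)$ were nonadjacent common neighbors of $u$ and $v$, then each $z_i$ fails to be adjacent to at most $(\lambda-1)-\tau=(t-1)(\mu-1)$ of the $\lambda-1$ other common neighbors of $u$ and $v$, and since $z_1\not\sim z_2$ each $z_i$ already ``spends'' one of these on $z_j$; counting the common neighbors of $u$ and $v$ adjacent to both $z_1$ and $z_2$, and adjoining $u$ and $v$ themselves, produces at least $\lambda-2(t-1)(\mu-1)+2$ common neighbors of $z_1$ and $z_2$. But $z_1$ and $z_2$ lie at distance $2$ (both are adjacent to $u$), so sub-ample regularity allows them at most $\mu$ common neighbors, forcing $\lambda\le(2t-1)(\mu-1)-1$ and contradicting the first hypothesis. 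Essentially the same count shows that if $\{u',v'\}$ is an edge contained in $L(u,v)$ then $L(u',v')=L(u,v)$, so lines are well defined, and that any maximal clique $C$ with $|C|\ge\lambda+2-(t-1)(\mu-1)$ satisfies $C=L(e)$ for each edge $e\subseteq C$; this yields (b) and (d), and in particular shows each line is contained in, hence equal to, a maximal clique.

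The technical heart, and the step I expect to be the main obstacle, is the order bound in (a) together with (c); this is where the second hypothesis $k<(t+1)(\lambda+1)-\tfrac12 t(t+1)(\mu-1)$ is used. The mechanism is a local count at a fixed vertex $x$: by well-definedness, the lines through $x$ partition the neighborhood $\Gamma(x)$ into pieces $P_1,\dots,P_s$, one per line, with $|\Gamma(x)\cap\Gamma(y)|\setminus P_i$ accounting for $\lambda+1-|P_i|$ of the common neighbors of $x$ and any $y\in P_i$. The decisive observation is that whenever representatives $y_i\in P_i$ and $y_j\in P_j$ ($i\ne j$) can be chosen nonadjacent, the neighbors of $y_i$ inside $P_j$ and the neighbors of $y_j$ inside $P_i$ are two \emph{disjoint} sets of common neighbors of $y_i$ and $y_j$ not containing $x$, so their sizes sum to at most $\mu-1$; and if no nonadjacent pair exists then $P_i\cup P_j\cup\{x\}$ is a clique, which (once the pieces are known to be large) contradicts the maximality of the lines. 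Summing these pairwise comparisons through the identity $\lambda+1-|P_i|=\sum_{j\ne i}|\Gamma(y_i)\cap P_j|$, the count is arranged to yield exactly $k=\sum_i|P_i|\ge s(\lambda+1)-\binom{s}{2}(\mu-1)$; since the right-hand side is increasing in $s$ in the relevant range (again by the first hypothesis), comparison with the second hypothesis — whose right side is precisely this expression at $s=t+1$ — forces $s\le t$, and a refinement of the same estimates gives the order bound $|P_i|\ge\lambda+1-(t-1)(\mu-1)$. The genuine subtleties are the interdependence of (a), (c), (d), and maximality, which must be untangled by proving them in the right order to avoid circularity, and the handling of degenerate small pieces where the ``nonadjacent representatives'' argument cannot be applied directly and must be absorbed separately; these are exactly the places where Metsch's argument works hardest. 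Once the local facts hold at every vertex, consistency across vertices is automatic from (d), and the theorem follows.
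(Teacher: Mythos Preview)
The paper does not prove this theorem; it is quoted as a known result of Metsch (cited as \cite[Result~2.1]{metsch2}) and used as a black box. The only derivation the paper gives is the one-line verification that Corollary~\ref{cor:metsch} follows by substituting $t=\lceil 3k/(2(\lambda+1))\rceil$. What the paper \emph{does} prove is the asymptotic consequence, Theorem~\ref{thm:clique-asymptotic}, via the Clique Partition Lemma~\ref{lem:clique-local}; that argument is deliberately simpler than Metsch's, working only under the stronger assumption $k\mu=o(\lambda^2)$ and using Spielman's edge-count (Lemma~\ref{lem:spielman}) to show directly that in each local neighborhood the high-degree vertices form one large clique and everything else is small.

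Your proposal, by contrast, is a faithful sketch of Metsch's own proof: define a canonical ``line'' through each edge via a degree threshold in the common neighborhood, show lines are cliques by a double-count against the $\mu$-bound, show lines are edge-independent, and then run the local partition count $k=\sum_i|P_i|\ge s(\lambda+1)-\binom{s}{2}(\mu-1)$ to force $s\le t$ and the order bound. You have the architecture right and you correctly flag the delicate spot (the interdependence of maximality, the order bound, and the ``nonadjacent representatives'' step for small pieces). Since the paper offers nothing to compare against for the full theorem, the relevant comparison is with the paper's asymptotic argument: that route trades the explicit constants and the sharp threshold $\lambda+2-(t-1)(\mu-1)$ for a much shorter proof that avoids exactly the circularity you identify, at the cost of only yielding $o(\lambda)$ statements.
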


\begin{remark}We note that special cliques of Theorem~\ref{thm:metsch} can be
    easily recognized by the degree of the vertices in the common neighborhood
    of a pair of adjacent vertices. In particular, if $u$ and $v$ are two
    adjacent vertices of $G$, then a common neighbor $w$ of $u$ and $v$ lies
    in the special clique containing $u$ and $v$ iff in the subgraph of $G$
    induced on the common neighborhood of $u$ and $v$, the degree of $w$ is at
    least $\lambda - (t-1)(\mu - 1) - 1$.
\end{remark}

\begin{corollary}\label{cor:metsch}
Let $G$ be a sub-amply regular graph with parameters $(n,k,\lambda,\mu)$ such
that
\begin{equation}\label{eq:main}
    (\lambda+1)^2 > (3k + \lambda + 1)(\mu-1).
\end{equation}
Then the maximal cliques of order at least
$\lambda + 2 - (\ceil{(3/2)k/(\lambda+1)} - 1)(\mu-1)$
form a clique geometry.
\end{corollary}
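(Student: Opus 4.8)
The plan is to apply Theorem~\ref{thm:metsch} with the single choice
$t := \ceil{(3/2)k/(\lambda+1)}$; then the order threshold
$\lambda + 2 - (t-1)(\mu-1)$ in its conclusion is exactly the one claimed here,
so it remains only to verify Metsch's two numerical hypotheses for this $t$.
The degenerate cases are disposed of at once: if $k=0$ there are no edges, and
the cases $\mu \le 1$ are handled directly (cf.\ the discussion preceding
Theorem~\ref{thm:metsch}; when $\mu = 1$ the common neighborhood of every edge
is a clique, yielding a geometry of cliques of order $\lambda+2$). So assume
$k \ge 1$ and $\mu \ge 2$, so that $t$ is a positive integer and $\mu-1 \ge 1$.
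From $t-1 < (3/2)k/(\lambda+1) \le t$ we get the two estimates
\[
2t-1 < \frac{3k}{\lambda+1} + 1 = \frac{3k+\lambda+1}{\lambda+1}
\qquad\text{and}\qquad
\lambda + 1 \ge \frac{3k}{2t};
\]
these are the only properties of $t$ we use.

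First I would verify the hypothesis $\lambda > (2t-1)(\mu-1) - 1$. Multiplying
the first estimate by $\mu-1 \ge 1$ and then invoking the assumption
\eqref{eq:main} of the Corollary gives
\[
(2t-1)(\mu-1) < \frac{(3k+\lambda+1)(\mu-1)}{\lambda+1}
< \frac{(\lambda+1)^2}{\lambda+1} = \lambda+1,
\]
which is exactly the desired hypothesis, and which moreover bounds
$\mu - 1 < (\lambda+1)/(2t-1)$. I then feed this bound into the second
hypothesis, $k < (t+1)(\lambda+1) - \tfrac12 t(t+1)(\mu-1)$. Using it, and then
the estimate $\lambda+1 \ge 3k/(2t)$,
\[
(t+1)(\lambda+1) - \tfrac12 t(t+1)(\mu-1)
> (t+1)(\lambda+1)\cdot\frac{3t-2}{2(2t-1)}
\ge \frac{3k\,(t+1)(3t-2)}{4t(2t-1)},
\]
all steps being legitimate because $t$, $2t-1$, $3t-2$ are positive for
$t \ge 1$. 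Since $k \ge 1$, the right-hand quantity exceeds $k$ exactly when
$3(t+1)(3t-2) > 4t(2t-1)$, i.e.\ when $t^2 + 7t - 6 > 0$; the left side here is
increasing in $t$ and equals $2$ at $t=1$, so this holds for every integer
$t \ge 1$. Both of Metsch's hypotheses thus hold, and Theorem~\ref{thm:metsch}
yields the asserted clique geometry.

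I expect the one point requiring care to be the direction of the rounding: the
first hypothesis pushes $t$ to be small (we need $2t-1$ below
$(3k+\lambda+1)/(\lambda+1)$), while the second pushes $t$ to be large (we need
$(t+1)(\lambda+1)$ not too much below $\tfrac32 k$), and it is precisely because
$(3/2)k/(\lambda+1)$ lies in the half-open interval $(t-1,\,t]$ that a single
integer $t$ satisfies both. Once \eqref{eq:main} has been used (via the first
hypothesis) to control $\mu-1$, checking the second reduces to the elementary
inequality $t^2 + 7t - 6 > 0$, so there is no real obstacle beyond choosing $t$
and tracking which estimate on $t$ to apply where.
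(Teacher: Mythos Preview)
Your proof is correct and follows exactly the paper's approach: the paper's entire argument is the single sentence ``The corollary is obtained from Theorem~\ref{thm:metsch} by setting $t = \ceil{3k/(2(\lambda+1))}$,'' and you have supplied the verification of Metsch's two hypotheses that the paper leaves to the reader. Your bookkeeping (the two estimates $2t-1 < (3k+\lambda+1)/(\lambda+1)$ and $\lambda+1 \ge 3k/(2t)$, and the reduction of the second hypothesis to $t^2+7t-6>0$) is accurate.
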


The corollary is obtained from Theorem~\ref{thm:metsch} by setting 
$t = \ceil{3k/(2(\lambda+1))}$.\qed

The starting point of our work was Spielman's 1996 paper~\cite{spielman} in
which he derived asymptotic consequences of Neumaier's 1979 classification of
strongly regular graphs~\cite{neumaier79}, including a bound on the parameter
$\lambda$.  Our bound~\eqref{eq:lambda-explicit} applies more generally to
sub-amply regular graphs (and hence does not require Neumaier's
classification), and improves Spielman's bound for $k > n^{5/8}$.

We prove the bound~\eqref{eq:lambda-explicit} in Section~\ref{sec:lambda}. In
Section~\ref{sec:srg}, we compare Spielman's bound and Pyber's bound to our
own. Then, in Section~\ref{sec:iso}, we explain the connection to graph
isomorphism testing in some detail. 

The asymptotic viewpoint makes the results considerably more
transparent.
In Section~\ref{sec:clique}, we give a short self-contained proof of 
Theorem~\ref{thm:clique-asymptotic} (below), an
asymptotic corollary to Metsch's theorem.

To interpret asymptotic statements such as ``Let $G$ be an amply regular graph
with $k\mu = o(\lambda^2)$,'' we think of our graph $G$ as belonging to some
infinite family for which the asymptotic relation holds.  All hidden constants
are absolute, and all limits are uniform as the number of vertices $n \to
\infty$.  We use 
common notation for asymptotic relations,
including writing $f \sim g$ (asymptotic equality) for functions $f$ and $g$
for which $\lim_{n\to\infty}(f(n)/g(n)) = 1$.  We write $f(n)\gtrsim g(n)$ if
$f(n) \sim \max\{f(n),g(n)\}$. 

\begin{theorem}\label{thm:clique-asymptotic}
Let $G$ be a sub-amply regular graph with parameters $(n,k,\lambda,\mu)$ such
that $k\mu = o(\lambda^2)$.  Then (for $n$ sufficiently large) every pair of
adjacent vertices belongs to a unique maximal clique of order $\sim \lambda$,
and all other maximal cliques in $G$ have order $o(\lambda)$.
\end{theorem}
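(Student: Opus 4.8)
The plan is to derive the theorem from Metsch's clique geometry — in the form of Corollary~\ref{cor:metsch} — together with elementary asymptotic bookkeeping and the defining inequality for pairs at distance $2$; in particular, no part of Metsch's theorem needs to be reproved.

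\emph{Set-up.} If $\mu\le 1$ the conclusion is classical: for $\mu=0$ the graph $G$ is a disjoint union of cliques of order $\lambda+2$, and for $\mu=1$ the set $\{u,v\}\cup(N(u)\cap N(v))$ is, for every edge $uv$, the unique maximal clique through $uv$ and has order $\lambda+2$. So assume $\mu\ge 2$. A vertex together with any neighbor and their $\lambda$ common neighbors gives $k\ge\lambda+1$, so $k\mu=o(\lambda^2)$ forces $\mu=o(\lambda)$ and, using $\mu\ge2$, also $\lambda\le k\le\tfrac12 k\mu=o(\lambda^2)$, hence $\lambda\to\infty$. Moreover every clique has order at most $\lambda+2$: inside a clique $C$, an edge $uv\in C$ has $|C|-2$ further neighbors in $C$, all common neighbors of $u$ and $v$. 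Now set $t=\ceil{3k/(2(\lambda+1))}$. From $k\mu=o(\lambda^2)$ and $\mu=o(\lambda)$ we get $(3k+\lambda+1)(\mu-1)\le 3k\mu+(\lambda+1)\mu=o((\lambda+1)^2)$, so \eqref{eq:main} holds for $n$ large and Corollary~\ref{cor:metsch} applies: the maximal cliques of order at least $\lambda+2-(t-1)(\mu-1)$ — call them the \emph{special} cliques — form a clique geometry, and by Theorem~\ref{thm:metsch} each vertex lies in at most $t$ of them. Finally $(t-1)(\mu-1)\le t\mu\le\bigl(\tfrac{3k}{2(\lambda+1)}+1\bigr)\mu=\tfrac{3k\mu}{2(\lambda+1)}+\mu=o(\lambda)$, so every special clique has order between $\lambda+2-o(\lambda)$ and $\lambda+2$; since $\lambda\to\infty$ this is $\sim\lambda$, and (being a clique geometry) every edge lies in exactly one special clique.

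\emph{Non-special maximal cliques are small.} This is the crux, and the weak bound $|C|<\lambda+2-(t-1)(\mu-1)=\lambda-o(\lambda)$ that follows directly from Metsch's threshold is far too weak. Let $C$ be a maximal clique that is not special, write $c=|C|$, and assume $c\ge 2$. Fix $u\in C$. Each edge $\{u,v\}$ with $v\in C\setminus\{u\}$ lies in a unique special clique, necessarily one of the at most $t$ special cliques through $u$; so by pigeonhole some special clique $S\ni u$ contains at least $(c-1)/t$ of the vertices of $C\setminus\{u\}$, and thus $d:=|C\cap S|\ge 1+(c-1)/t$. Since $C$ is not special, $C\ne S$; both are maximal, so we may pick $x\in C\setminus S$, and every vertex of $C\cap S$ is a neighbor of $x$ (as $C\cap S\subseteq C$) and of every $y\in S\setminus C$ (as $C\cap S\subseteq S$). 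If $x$ were adjacent to every $y\in S\setminus C$, then $x$ would be adjacent to all of $S$, contradicting maximality of $S$; hence $x$ is non-adjacent to some $y\in S\setminus C$. Then $x$ and $y$ form a pair at distance $2$ with at least $d$ common neighbors, so $d\le\mu$ since $G$ is sub-amply regular. Combining, $1+(c-1)/t\le\mu$, i.e.\ $c\le t(\mu-1)+1\le t\mu=o(\lambda)$.

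\emph{Conclusion.} Every maximal clique of $G$ is either special — of order $\sim\lambda$, with each edge contained in exactly one — or else of order $o(\lambda)$. Hence, for $n$ large, each pair of adjacent vertices lies in a unique maximal clique of order $\sim\lambda$ (its special clique) and all other maximal cliques have order $o(\lambda)$. The only real obstacle is the middle step: the clique geometry by itself already pins the special cliques to order $\sim\lambda$, but excluding large non-special maximal cliques requires combining the bounded ``degree in the geometry'' (at most $t$ special cliques per vertex) with the distance-$2$ inequality to push the overlap $|C\cap S|$ down to $\le\mu$, whence $|C|\le t\mu=o(\lambda)$.
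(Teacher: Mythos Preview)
Your proof is correct. The set-up, the verification of Metsch's hypothesis~\eqref{eq:main}, and the estimate showing special cliques have order $\sim\lambda$ are all fine. The crucial middle step---bounding a non-special maximal clique $C$ by pigeonholing the edges from a vertex $u\in C$ into the at most $t$ special cliques through $u$, then using maximality of the resulting special clique $S$ to find a distance-$2$ pair $(x,y)$ with $|C\cap S|$ common neighbors---is a clean argument and yields the sharp-enough bound $|C|\le t\mu=o(\lambda)$.

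The route differs genuinely from the paper's. Section~\ref{sec:clique} deliberately avoids quoting Metsch's theorem and instead gives a self-contained proof: for each vertex $u$ one works inside the induced subgraph on $N(u)$, uses Spielman's elementary double-count (Lemma~\ref{lem:spielman}) to show that only $o(\lambda)$ vertices of $N^+(u)$ can have large codegree there, and reads off directly that the remaining $\sim\lambda$ vertices form the unique large maximal clique through $u$ while any other maximal clique through $u$ has order $o(\lambda)$. What the paper's approach buys is independence from Metsch's result and, as a by-product, the local Clique Partition Lemma~\ref{lem:clique-local}, which is reused elsewhere (cf.~\cite{sun-wilmes}). What your approach buys is brevity once Metsch is granted: the special-clique side is immediate from the threshold, and your pigeonhole-plus-maximality argument for the non-special side is arguably more transparent than the codegree analysis, and makes explicit why the ``at most $t$ special cliques per vertex'' clause of Theorem~\ref{thm:metsch} is exactly what is needed to finish.
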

So the large maximal cliques form a clique geometry.

The key lemma used in the proof, Lemma~\ref{lem:clique-local}, is used in the
recent characterization of primitive coherent configurations with more than
$\exp(n^{1/3 + \eps})$ automorphisms by Sun and Wilmes~\cite{sun-wilmes}.

\subsection{Asymptotic Delsarte geometry}
Let $s$ denote the least eigenvalue of (the adjacency matrix of) 
the graph $G$ (so $s < 0$).
The following bound on the order of cliques in
distance-regular graphs was established by Delsarte.
\begin{lemma}[Delsarte~\cite{delsarte}]\label{lem:delsarte-bound}
   If $G$ is a distance-regular graph then 
   no clique in $G$ has order greater than $1+k/|s|$.
\end{lemma}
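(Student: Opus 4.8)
\emph{Proof plan.} The plan is to run Delsarte's linear programming bound in the simple form it takes for cliques, which relies only on the positive semidefiniteness of the orthogonal projections onto the eigenspaces of a distance-regular graph. Since $G$ is distance-regular, say of diameter $d$, the distance matrices $A_0 = I, A_1 = A, \dots, A_d$ span a commutative algebra (the Bose--Mesner algebra) of symmetric, simultaneously diagonalizable matrices; let $V_0 = \langle \mathbf 1 \rangle, V_1, \dots, V_d$ be the common eigenspaces, indexed so that the eigenvalue $\theta_j$ of $A$ on $V_j$ satisfies $\theta_0 = k$ and $\theta_d = s$, let $m_j = \dim V_j$, and let $E_j$ be the orthogonal projection onto $V_j$. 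Each $E_j$ is a polynomial in $A$, hence lies in the Bose--Mesner algebra, so we may write $E_j = \sum_i q_{ij} A_i$ for real scalars $q_{ij}$; moreover $E_j$ is positive semidefinite.

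First I would determine the two coefficients that are needed. Taking traces in $E_j = \sum_i q_{ij} A_i$ and using $\operatorname{tr} A_0 = n$ and $\operatorname{tr} A_i = 0$ for $i \ge 1$ gives $q_{0j} = m_j/n$. Multiplying by $A$ and taking traces, and using $\operatorname{tr}(A E_j) = \theta_j m_j$ together with $\operatorname{tr}(A A_i) = nk$ if $i = 1$ and $0$ otherwise, gives $q_{1j} = \theta_j m_j/(nk)$.

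Next, let $C$ be any clique of order $c$ and let $x = \mathbf 1_C$ be its indicator vector. The quantity $x^{\top} A_i x$ counts the ordered pairs of vertices of $C$ at distance exactly $i$; since $C$ is a clique, every such distance is $0$ or $1$, so $x^{\top} A_0 x = c$, $x^{\top} A_1 x = c(c-1)$, and $x^{\top} A_i x = 0$ for all $i \ge 2$. Hence, taking $j = d$ (so $\theta_d = s < 0$) and using $E_d \succeq 0$,
\[
0 \;\le\; x^{\top} E_d x \;=\; q_{0d}\, x^{\top} A_0 x + q_{1d}\, x^{\top} A_1 x \;=\; \frac{c\, m_d}{n}\left(1 + \frac{s(c-1)}{k}\right).
\]
Dividing by the positive quantity $c\, m_d/n$ yields $k + s(c-1) \ge 0$, and since $s < 0$ this rearranges to $c \le 1 + k/|s|$, which is the claimed bound.

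I expect the only genuine obstacle to be conceptual: one must resist the temptation to prove the bound from the spectrum alone. Indeed, eigenvalue interlacing applied to the partition $\{C, V(G) \setminus C\}$, or bounding the Rayleigh quotient $x^{\top} A x$ via $\theta_j \ge s$, each yields only a \emph{lower} bound on $c$; Delsarte's upper bound really does require the full association-scheme structure (all the matrices $A_0, \dots, A_d$ and the semidefiniteness of the $E_j$), and the clique hypothesis is used precisely to make the ``inner distribution'' $x^{\top} A_i x$ vanish for $i \ge 2$. Apart from that, the remaining work is the routine trace bookkeeping for $q_{0j}$ and $q_{1j}$.
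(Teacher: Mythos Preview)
Your argument is correct and is exactly the classical Delsarte linear-programming proof. Note, however, that the paper does not supply its own proof of this lemma: it is quoted as a known result of Delsarte and used as a black box in Section~\ref{sec:delsarte}. So there is nothing in the paper to compare your proof against; what you have written is the standard derivation one finds in, e.g., Godsil or Brouwer--Cohen--Neumaier.

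A minor remark on your closing commentary. Your instinct that the bound is not a ``spectrum-only'' fact is right: for instance, the triangular prism $K_3\,\square\,K_2$ is $3$-regular with least eigenvalue $-2$, so $1+k/|s|=5/2$, yet it contains triangles. What your proof actually uses, beyond the eigenvalue $s$, is that the projector $E_d$ lies in the span of the $A_i$ with computable coefficients $q_{0d},q_{1d}$; the higher $A_i$ then drop out automatically because the inner distribution of a clique is supported on $\{0,1\}$. So the association-scheme structure enters exactly where you say it does, through the expansion $E_j=\sum_i q_{ij}A_i$. Your side claim that interlacing on the partition $\{C,\,V\setminus C\}$ gives only a lower bound on $c$ is not quite accurate (interlacing gives two-sided information, and with more care one can recover the Delsarte bound this way for distance-regular graphs), but this does not affect the validity of the main argument.
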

Any clique achieving this order is called a \emph{Delsarte
clique}~\cite{godsil-ac}.

We call a graph $G$ \emph{Delsarte-geometric} 
if $G$ is distance-regular and it has a clique geometry in which
all special cliques are Delsarte.
This concept was introduced by Godsil~\cite{godsil-geom} who called
such graphs ``geometric.''
Johnson and Hamming graphs are examples of Delsarte-geometric graphs.  

Godsil~\cite{godsil-geom} gave the
following sufficient condition for a distance-regular
graph to be Delsarte-geometric.

An \emph{$m$-claw} in a graph is an induced $K_{1,m}$ subgraph.

\begin{theorem}[Godsil~{\cite{godsil-geom}}] Let $G$ be a distance-regular graph with
least eigenvalue $s$. If there are no $m$-claws in $G$ with $m > |s|$ and
\begin{equation}\label{eq:godsil}
    \lambda + 1 > (2|s|-1)(\mu-1) 
\end{equation}
then $G$ is Delsarte-geometric.
\end{theorem}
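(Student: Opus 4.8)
The plan is to deduce the theorem from Metsch's clique geometry (Theorem~\ref{thm:metsch}) applied with the parameter $t = |s|$, and then to upgrade the resulting special cliques to Delsarte cliques by exploiting the claw-freeness hypothesis. First I would check that the two hypotheses of Godsil's theorem translate into the two hypotheses of Theorem~\ref{thm:metsch} with $t = |s|$. The eigenvalue inequality~\eqref{eq:godsil} is exactly $\lambda > (2t-1)(\mu-1) - 1$ with $t = |s|$, so the first Metsch condition holds verbatim. For the second Metsch condition, $k < (t+1)(\lambda+1) - \tfrac12 t(t+1)(\mu-1)$, I would use the standard interlacing/eigenvalue identity for distance-regular graphs relating $k$, $\lambda$, $\mu$ and the least eigenvalue $s$ — concretely, that $k \le (\lambda - s)(1 - k_{?}/\cdots)$-type bounds, or more directly the well-known inequality $k \ge (|s|+1)\bigl((\lambda+1) - (|s|-1)(\mu-1)/2\bigr)$ fails, i.e., that distance-regularity forces $k$ to be small enough in terms of $|s|$. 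The clean way is: from~\eqref{eq:godsil} we get $\lambda + 1 - (|s|-1)(\mu-1) > (|s|-1)(\mu-1) \ge 0$ is a bit weak, so instead I would invoke the absolute bound $k \le \tfrac12 |s|(|s|+1)(\lambda+1)$ (or the sharper linear-algebra bound that holds for distance-regular graphs via the Delsarte LP / the fact that cliques of order $1+k/|s|$ exist or via the "claw bound") — the point being that with no large claws and~\eqref{eq:godsil}, $k$ is automatically below the Metsch threshold. I expect verifying this second inequality to be the main obstacle, since it requires pulling in a nontrivial fact about distance-regular graphs (as opposed to amply regular graphs) — presumably a clique or eigenvalue bound — rather than being a formal manipulation.

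Granting both hypotheses, Theorem~\ref{thm:metsch} yields a clique geometry $\mcC$ in which every edge lies in a unique special clique, every special clique is maximal of order at least $\lambda + 2 - (t-1)(\mu-1) = \lambda + 2 - (|s|-1)(\mu-1)$, and every vertex lies in at most $t = |s|$ special cliques. The next step is to show each special clique $C$ is in fact a Delsarte clique, i.e., has order exactly $1 + k/|s|$. By Lemma~\ref{lem:delsarte-bound} we already have $|C| \le 1 + k/|s|$, so I need the reverse inequality. Here I would use a counting/averaging argument on the special cliques through a fixed vertex $v$: the neighbors of $v$ are partitioned (since $\mcC$ is a geometry, adjacency within $N(v)$ is refined by which special clique an edge lies in, but two neighbors of $v$ may be non-adjacent) — more carefully, each neighbor $w$ of $v$ lies in a unique special clique $C_{vw}$ containing the edge $vw$, and these special cliques through $v$ number at most $|s|$. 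Since every one of the $k$ neighbors of $v$ lies in one of these $\le |s|$ special cliques, by pigeonhole some special clique through $v$ has at least $1 + k/|s|$ vertices, forcing equality $|C| = 1 + k/|s|$ for that clique; but then by distance-regular homogeneity (or by repeating the argument and using that Delsarte cliques are the unique maximum cliques) every special clique has exactly this order. The claw-freeness hypothesis $m \le |s|$ is what guarantees the neighborhood of $v$ really is covered by at most $|s|$ cliques with no room to spare — an independent set of size $|s|+1$ in $N(v)$ would be an $(|s|+1)$-claw at $v$, contradiction — so in fact $N(v)$ is covered by exactly $|s|$ cliques and each neighbor lies in exactly one, giving $k = |s| \cdot (|C|-1)$ on the nose.

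Finally I would assemble the pieces: the collection of special cliques $\mcC$ from Metsch's theorem consists of maximal cliques covering every edge uniquely (a clique geometry by definition), and by the previous paragraph each member has order exactly $1 + k/|s|$, i.e., is a Delsarte clique; hence $G$ is Delsarte-geometric. The one subtlety to be careful about is the case $\mu - 1 = 0$ or small, where the lower bound $\lambda + 2 - (|s|-1)(\mu-1)$ on special-clique order may exceed $1 + k/|s|$ or the Metsch hypotheses degenerate — but when $\mu \le 1$ the graph trivially has a clique geometry and the Delsarte bound analysis still goes through, so these cases can be dispatched separately. The heart of the argument is thus the reduction to Metsch (obstacle: the second inequality, needing a distance-regular-specific bound on $k$) followed by a clean pigeonhole forced by claw-freeness.
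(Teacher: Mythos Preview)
The paper does not prove this statement; it is cited from Godsil~\cite{godsil-geom} without proof, so there is nothing in the paper to compare your attempt against. Nonetheless, your approach has a genuine gap at exactly the point you flag as the ``main obstacle,'' and it is not merely a technicality. You correctly note that Godsil's inequality~\eqref{eq:godsil} is precisely Metsch's first hypothesis with $t=|s|$. But Metsch's second hypothesis, $k < (|s|+1)(\lambda+1) - \tfrac{1}{2}|s|(|s|+1)(\mu-1)$, does not follow from Godsil's assumptions once $|s|\ge 3$. The claw-freeness yields only the elementary bound $k \le |s|(\lambda+1)$ (a maximum independent set of size at most $|s|$ in $N(v)$ dominates $N(v)$), and plugging this in reduces what you need to $\lambda+1 > \tfrac{1}{2}|s|(|s|+1)(\mu-1)$, which is strictly stronger than~\eqref{eq:godsil} for $|s|\ge 3$. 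For instance, strongly-regular parameters with least eigenvalue $-4$, $\mu=10$, $\lambda=70$ give $k=266$ while the Metsch threshold is $265$, yet~\eqref{eq:godsil} reads $71>63$. The eigenvalue bound you allude to (Lemma~\ref{lem:lambda-s-bound}, essentially $k < |s|\lambda + s^2$) is no improvement here. So Godsil's theorem is not a corollary of Metsch's, and the claw-freeness must do more than bound~$k$.

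Your endgame, by contrast, is correct and clean: once a clique geometry with at most $|s|$ special cliques through each vertex is in hand, those cliques through a vertex $v$ partition $N(v)$, so $k=\sum_i(|C_i|-1)\le |s|\cdot k/|s|$ by the Delsarte bound (Lemma~\ref{lem:delsarte-bound}), and equality forces every $|C_i|=1+k/|s|$ simultaneously---no separate homogeneity argument is needed. The difficulty lies entirely in producing the geometry; Godsil's original argument constructs the cliques directly inside each neighbourhood from the claw-freeness, rather than routing through Metsch.
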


It would seem desirable to replace the structural assumption (bound on
claw size) in Godsil's theorem by a reasonable assumption involving
the parameters of the graph only since this would allow broader
applicability of the result. Bang and Koolen make a step in this direction,
removing the structural assumption but strengthening the 
constraint on the parameters.
\begin{theorem}[Bang, Koolen~\cite{bang-koolen}]
If $\lambda > \lfloor s\rfloor ^2 \mu$ 
for a distance-regular graph $G$ with least
eigenvalue $s$ 
then $G$ is Delsarte-geometric.
\end{theorem}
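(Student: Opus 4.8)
The plan is to construct the required clique geometry via Metsch's theorem (Theorem~\ref{thm:metsch}) and then to check, using Delsarte's bound (Lemma~\ref{lem:delsarte-bound}), that the special cliques it produces are all Delsarte. The advantage of this route is that it bypasses any explicit bound on claw size: rather than bounding the independence number of a local graph, it suffices to have a good upper bound on the valency $k$. As a preliminary reduction, note that $G$ must have integer least eigenvalue --- if $s$ is irrational then so is the Delsarte order $1+k/|s|$, so no clique attains it and $G$ cannot be Delsarte-geometric, and in fact a distance-regular graph with irrational least eigenvalue never satisfies $\lambda > \floor{s}^2\mu$ (for diameter $2$ this is the conference-graph case, where $\lambda\sim\mu\sim n/4$; the remaining cases need a short separate argument, which I would cite rather than reproduce). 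So from now on $s = -m$ with $m$ a positive integer, and the hypothesis reads $\lambda > m^2\mu$. (If $\mu = 0$ then $G$ is complete, $m=1$, and the conclusion is immediate, so I may also assume $\mu\ge 1$.)

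The one genuinely computational input is the valency bound $k \le m(\lambda+m)$. To obtain it, fix a vertex $v$. The matrix $A + mI$ is positive semidefinite, since its eigenvalues are $\theta + m \ge s+m = 0$. Testing it against vectors $z = \alpha\,\chi_{\{v\}} + \beta\,\chi_{N(v)}$, and using that each edge of $G$ lies on exactly $\lambda$ triangles (so the subgraph induced on $N(v)$ is $\lambda$-regular, with $k\lambda/2$ edges), one computes
\[
   z^\top(A+mI)z \;=\; m\alpha^2 + 2k\,\alpha\beta + (k\lambda+km)\,\beta^2 .
\]
Nonnegativity of this quadratic form in $(\alpha,\beta)$ is equivalent to $m\cdot k(\lambda+m) \ge k^2$, that is, to $k \le m(\lambda+m)$.

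Next I would apply Metsch's theorem with $t = m$. Its first hypothesis, $\lambda > (2m-1)(\mu-1)-1$, follows from $\lambda > m^2\mu \ge m^2(\mu-1) \ge (2m-1)(\mu-1)$, using $m^2\ge 2m-1$ and $\mu\ge 1$. For the second hypothesis, $k < (m+1)(\lambda+1) - \tfrac12 m(m+1)(\mu-1)$, I substitute the bound just proved: it suffices that $m(\lambda+m) < (m+1)(\lambda+1) - \tfrac12 m(m+1)(\mu-1)$, i.e.\ $\lambda > m^2 - m - 1 + \tfrac12 m(m+1)(\mu-1)$, and a short calculation shows this follows --- indeed with a margin of at least $m+1$ --- from $\lambda > m^2\mu$. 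Hence Theorem~\ref{thm:metsch} applies and yields a clique geometry $\mcC$ in which every vertex lies in at most $m$ special cliques.

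It remains to see that every member of $\mcC$ is a Delsarte clique. Fix a vertex $v$ and let $C_1,\dots,C_r$ be the special cliques through $v$, so $r\le m$. Since $\mcC$ is a clique geometry, each neighbor of $v$ lies in exactly one $C_i$, whence $N(v) = \bigsqcup_{i=1}^r (C_i\setminus\{v\})$ and $k = \sum_{i=1}^r(|C_i|-1)$. By Delsarte's bound each $|C_i|-1 \le k/m$, so
\[
   k \;=\; \sum_{i=1}^r(|C_i|-1) \;\le\; r\cdot\frac{k}{m} \;\le\; m\cdot\frac{k}{m} \;=\; k ,
\]
and equality throughout forces $r=m$ together with $|C_i|-1 = k/m$ for every $i$ --- that is, every special clique through $v$ is Delsarte. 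As $v$ was arbitrary and every special clique contains a vertex (and hence is among the $C_i$ of that vertex), all of $\mcC$ consists of Delsarte cliques, so $G$ is Delsarte-geometric. The step I expect to require the most care is the verification of Metsch's second hypothesis: the bound $k\le m(\lambda+m)$ is \emph{just} strong enough to clear it under $\lambda>m^2\mu$, so a cruder estimate of the form $k \le m\lambda + O(m^2\mu)$ would not do. The secondary point is the reduction to integer $s$, which I would settle by invoking known structural facts about distance-regular graphs of small diameter rather than developing them here.
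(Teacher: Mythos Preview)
The paper does not give a proof of this theorem; it is quoted from Bang and Koolen as background motivating the paper's own asymptotic result (Theorem~\ref{thm:delsarte}), so there is no in-paper argument to compare your proposal against.

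For what it is worth, your approach is sound and is essentially the standard one. The valency bound $k \le m(\lambda+m)$ via positive semidefiniteness of $A+mI$ is a well-known observation; your verification of Metsch's two hypotheses with $t=m$ is correct (the margin computation $m^2\mu - \bigl[m^2-m-1+\tfrac12 m(m+1)(\mu-1)\bigr] = \tfrac12 m(m-1)\mu - \tfrac12 m^2 + \tfrac32 m + 1 \ge m+1$ for $\mu\ge 1$ checks out); and the counting squeeze $k=\sum(|C_i|-1)\le r\cdot k/m \le k$ forcing $r=m$ and $|C_i|=1+k/m$ is exactly the right closing move. The only genuinely loose end, as you yourself flag, is the reduction to integral $s$; this is typically finessed by stating the hypothesis from the outset as $\lambda>m^2\mu$ with $m=-s$ a positive integer, which is how the original source frames it.
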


Note that for large $|s|$, the Bang--Koolen constraint
$s^2 \mu \lesssim \lambda$
requires essentially a factor 
of $|s|/2$ larger $\lambda$ than does Godsil's 
constraint~\eqref{eq:godsil} which for large $|s|$
and $\mu$ requires $2|s|\mu \lesssim \lambda$.

We point out that already an increase by a factor that goes
to infinity arbitrarily slowly
compared to Godsil's contraint, $|s|\mu = o(\lambda)$,
suffices for an \emph{asymptotic} Delsarte geometry,
i.\,e., a clique geometry where the order of the special
cliques is $\sim k/|s|$.  

\begin{theorem}\label{thm:delsarte}
Let $G$ be a distance-regular graph satisfying
$|s|\mu = o(\lambda)$.
Then $G$ is asymptotically Delsarte-geometric.
\end{theorem}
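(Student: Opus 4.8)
The plan is to deduce Theorem~\ref{thm:delsarte} from Theorem~\ref{thm:clique-asymptotic}, using the hypothesis $|s|\mu = o(\lambda)$ to (a) produce a clique geometry consisting of large maximal cliques and (b) identify the asymptotic order of those cliques with $k/|s|$. First I would observe that the hypothesis $|s|\mu=o(\lambda)$ forces $k\mu = o(\lambda^2)$, which is exactly the hypothesis of Theorem~\ref{thm:clique-asymptotic}: indeed, in a distance-regular graph the standard interlacing/eigenvalue bound gives $k \le |s|(\lambda+1)$ (equivalently, a vertex together with its neighborhood cannot contain too large an independent set, or one invokes the Delsarte bound $1+k/|s| \le $ clique size $\le$ something controlled by $\lambda$), so $k\mu \le |s|(\lambda+1)\mu = o(\lambda)\cdot O(\lambda) = o(\lambda^2)$. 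Hence by Theorem~\ref{thm:clique-asymptotic}, for $n$ large every edge of $G$ lies in a unique maximal clique of order $\sim\lambda$, these large cliques form a clique geometry $\mcC$, and all other maximal cliques have order $o(\lambda)$.

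It remains to show that the special cliques in $\mcC$ are \emph{asymptotically Delsarte}, i.e. that their order is $\sim k/|s|$. For the upper bound I would simply apply Delsarte's bound (Lemma~\ref{lem:delsarte-bound}): every clique, in particular every special clique, has order at most $1 + k/|s| \sim k/|s|$. For the matching lower bound, I would count edges between a fixed vertex $v$ and its neighborhood via the clique geometry. Since $G$ is distance-regular (hence amply regular), $v$ has $k$ neighbors, each pair of adjacent neighbors of $v$ lies in a common special clique, and the subgraph induced on $N(v)$ is partitioned (as far as edges go) by the traces of the special cliques through $v$. A vertex $v$ lies in some number $\rho(v)$ of special cliques; the clique geometry property together with the $o(\lambda)$ bound on all other maximal cliques, and the $\lambda$-regularity condition, pins down the local structure: the special cliques through $v$ cover all $k$ neighbors, pairwise meeting only in $v$, so $\sum (\ell_i - 1) \approx k$ where $\ell_i$ are their orders, and $\ell_i \sim \lambda$ for each. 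This gives $\rho(v) \sim k/\lambda$ large special cliques through each vertex. Then a double count of incident (vertex, special clique) pairs, or a direct eigenvalue argument, yields that the average order is $\sim \lambda$ only if $k/|s| \sim \lambda$ — more precisely, I expect one must show $\lambda \sim k/|s|$ under the hypothesis, which then makes "order $\sim\lambda$" and "order $\sim k/|s|$" the same statement.

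The main obstacle is precisely this last identification: proving $\lambda \sim k/|s|$ from $|s|\mu = o(\lambda)$. This should follow from Godsil's (or Metsch's) local analysis: in the clique geometry produced by Theorem~\ref{thm:metsch} with $t = |s|$ (which is admissible once $\lambda+1 > (2|s|-1)(\mu-1)$, and $|s|\mu=o(\lambda)$ certainly guarantees this for large $n$, provided the second Metsch inequality $k < (|s|+1)(\lambda+1) - \tfrac12|s|(|s|+1)(\mu-1)$ also holds — which again follows from $k \le |s|(\lambda+1)$ and $|s|^2\mu = o(|s|\lambda)= o(\lambda^2/\mu)$, a routine check), each vertex lies in at most $|s|$ special cliques. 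Combined with $\rho(v)\sim k/\lambda$ from the edge count above, this forces $k/\lambda \lesssim |s|$, i.e. $\lambda \gtrsim k/|s|$; Delsarte gives the reverse $\lambda \lesssim \ell_i \le 1+k/|s|$, so $\lambda\sim k/|s|$. Thus the proof reduces to: (1) verify both Metsch inequalities hold asymptotically with $t=|s|$; (2) run the local edge-count to get $\rho(v)\sim k/\lambda$; (3) combine with $\rho(v)\le |s|$ and Delsarte to get $\lambda\sim k/|s|$; (4) conclude via Theorem~\ref{thm:clique-asymptotic} that the resulting large-clique geometry has all special cliques of order $\sim k/|s|$, hence asymptotically Delsarte. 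I would carry these out in that order, expecting step (2)–(3) — making the "$\rho(v)\sim k/\lambda$" count fully rigorous using only the $o(\lambda)$ control on small cliques — to require the most care.
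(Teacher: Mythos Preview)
Your overall plan—derive $k\mu = o(\lambda^2)$, invoke Theorem~\ref{thm:clique-asymptotic}, then identify $\lambda \sim k/|s|$—matches the paper's, but the very first step contains a genuine gap. The inequality $k \le |s|(\lambda+1)$ is \emph{not} a general fact about distance-regular graphs: in the Petersen graph $k=3$, $\lambda=0$, $s=-2$, so $|s|(\lambda+1)=2<3=k$. Your parenthetical justification has Delsarte's bound backwards—Lemma~\ref{lem:delsarte-bound} says every clique has order \emph{at most} $1+k/|s|$, not at least—so there is no a priori clique forcing $1+k/|s|\le \lambda+2$. You invoke this inequality twice (to get $k\mu=o(\lambda^2)$ and to verify Metsch's second condition with $t=|s|$), and neither step survives without it. Note too that $k \lesssim |s|\lambda$ is equivalent to $\lambda \gtrsim k/|s|$, which is precisely the lower bound you are trying to establish in step~(3); so even asymptotically you are assuming what you need to prove.

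The paper replaces this with Lemma~\ref{lem:lambda-s-bound}, the correct (weaker) inequality $\lambda + k/\lambda > k/|s|$, proved via the sign pattern of the standard sequence $u_i(s)$. Multiplying by $|s|\mu$ gives $k\mu < |s|\mu(\lambda + k/\lambda) = o(\lambda^2 + k)$; since $\mu\ge 1$ forces $k\le k\mu$, this bootstraps to $k\mu = o(\lambda^2)$, and Theorem~\ref{thm:clique-asymptotic} applies. The same lemma then handles your steps (1)--(3) in one line: the hypothesis with $\mu\ge 1$ gives $|s|=o(\lambda)$, hence $k/\lambda = o(k/|s|)$, so $\lambda + k/\lambda > k/|s|$ yields $\lambda \gtrsim k/|s|$ directly. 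Combined with Delsarte's upper bound on the special cliques, $\lambda\sim k/|s|$. No appeal to Metsch with $t=|s|$ and no edge count for $\rho(v)$ is needed.
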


Theorem~\ref{thm:delsarte} is proved in Section~\ref{sec:delsarte}.

\subsection*{Acknowledgements}
The authors wish to acknowledge the inspiration from their joint
work with Xi Chen, Xiaorui Sun, and Shang-Hua Teng on the
isomorphism problem for strongly regular graphs.

\section{Bounding $\lambda$ in sub-amply regular graphs}\label{sec:lambda}
In this section, we derive our bound on $\lambda$ (Theorem~\ref{thm:lambda})
from Corollary~\ref{cor:metsch}.  

\begin{lemma}\label{lem:hypergraph}
Let $\mcC$ be a geometric collection of cliques in a graph $G$ on $n$ vertices
such that every vertex is in at least $r \ge 2$ and at most $R$ cliques, and
each clique has order at least $\ell$.  Then 
\[
    \ell \le \frac{R}{\sqrt{r(r-1)}}\sqrt{n}.
\]
\end{lemma}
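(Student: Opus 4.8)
The plan is a standard double-counting argument on incident triples, comparing the number of (clique, pair-of-vertices) incidences against the number of (vertex-pair, pair-of-cliques) incidences, using that $\mcC$ is a clique geometry so that each edge lies in exactly one special clique. First I would count pairs $(C, \{u,v\})$ where $C \in \mcC$ and $u,v$ are two distinct vertices of $C$. Since each clique has order at least $\ell$, this count is at least $|\mcC| \cdot \binom{\ell}{2}$. On the other hand, since $\mcC$ is a clique geometry, any such vertex pair $\{u,v\}$ is an edge lying in exactly one member of $\mcC$, so this count is at most $\binom{n}{2}$. This already gives $|\mcC| \le \binom{n}{2}/\binom{\ell}{2}$, which bounds the number of cliques but not yet $\ell$ itself; to get a bound on $\ell$ I need to bring in the degree parameters $r$ and $R$.

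Next I would count incident pairs differently: count triples $(v, \{C, C'\})$ where $v$ is a vertex and $C \ne C'$ are two distinct special cliques both containing $v$. Since every vertex lies in between $r$ and $R$ cliques, the number of such triples is between $n\binom{r}{2}$ and $n\binom{R}{2}$. But two distinct maximal cliques share at most one vertex (if they shared an edge they would both be the unique special clique through that edge, contradiction), so each unordered pair $\{C,C'\}$ of special cliques contributes at most one such triple. Combining: the number of pairs of special cliques is at least $n\binom{r}{2}$, hence $\binom{|\mcC|}{2} \ge n\binom{r}{2}$, giving $|\mcC| \gtrsim r\sqrt{n}$ roughly. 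The cleanest route, though, is to combine the upper bound $|\mcC|\binom{\ell}{2}\le\binom n2$ with a lower bound on $|\mcC|$ obtained from the vertex–clique incidences: $\sum_{C}|C| = \sum_v (\text{number of cliques through } v) \ge nr$, so $|\mcC|\cdot(\text{max clique order})\ge nr$; but I want a lower bound on $|\mcC|$ in terms of $\ell$ directly, so instead I use $\sum_C |C|(|C|-1) = \sum_v d_v(d_v-1) \le nR(R-1)$ where $d_v$ is the number of special cliques through $v$, together with $\sum_C|C|(|C|-1)\ge |\mcC|\ell(\ell-1)$ — wait, that gives an upper bound on $|\mcC|$ again. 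Let me instead pair the two inequalities I actually have: from counting incident vertex pairs, $\binom n2 \ge |\mcC|\binom\ell2$; from counting vertex–clique–clique triples, I need each vertex to see at least $\binom r2$ pairs of its cliques, and I must exhibit enough clique-pairs — this is where I combine with $\sum_C|C|\le |\mcC|\cdot\max|C|$.

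The cleanest argument: for each vertex $v$, let $C_1(v),\dots$ be its special cliques (at least $r$ of them). Count pairs $(v, C)$ with $v\in C$: this equals $\sum_C |C| \ge |\mcC|\cdot\ell$ on one hand (every clique has $\ge\ell$ vertices) and equals $\sum_v d_v \le nR$ on the other, so $|\mcC| \le nR/\ell$. Separately, count ordered pairs of distinct vertices $(u,v)$ that are adjacent and lie in a common special clique: this is exactly $\sum_C |C|(|C|-1) \ge |\mcC|\,\ell(\ell-1)$, and since the geometry is a partition of the edge set it is at most $n(n-1)$. The first of these bounds $|\mcC|$ from above by $nR/\ell$; I want a lower bound, which comes from the $r$ side: $\sum_C|C| = \sum_v d_v \ge nr$, and each clique has order at most... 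I have no upper bound on clique order, so I cannot conclude this way. The resolution is to count, for each $v$, the pairs of \emph{cliques} through $v$: there are at least $\binom r2 \ge r(r-1)/2$ of them, summing to $\ge nr(r-1)/2$; since distinct cliques meet in at most one vertex, distinct vertices give distinct clique-pairs, so $\binom{|\mcC|}{2} \ge nr(r-1)/2$, whence $|\mcC|^2 \ge |\mcC|(|\mcC|-1)\ge nr(r-1)$, i.e. $|\mcC|\ge\sqrt{nr(r-1)}$. Wait — distinct vertices need not give distinct clique-pairs, but distinct clique-pairs arise from \emph{at most one} vertex since two cliques meet in $\le 1$ point; so the map (vertex, pair through it) $\to$ (pair) is injective on pairs, meaning $\sum_v\binom{d_v}2 \le \binom{|\mcC|}2$, giving $n\binom r2\le\binom{|\mcC|}2$ and $|\mcC|\gtrsim r\sqrt n$. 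Now combine with $|\mcC|\le nR/\ell$: then $\sqrt{nr(r-1)} \le |\mcC| \le nR/\ell$, so $\ell \le nR/\sqrt{nr(r-1)} = R\sqrt n/\sqrt{r(r-1)}$, exactly the claimed bound. The main obstacle is simply choosing the right pair of counting identities — specifically realizing that the lower bound on $|\mcC|$ must come from the \emph{clique-pair} count (using that two special cliques meet in at most one vertex, which follows from maximality and uniqueness in the geometry), while the upper bound on $|\mcC|$ comes from the \emph{vertex–clique incidence} count; once both are in hand the arithmetic is immediate.
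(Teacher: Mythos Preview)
Your final argument is correct and essentially identical to the paper's: bound $|\mcC|\le nR/\ell$ via vertex--clique incidences, bound $|\mcC|(|\mcC|-1)\ge nr(r-1)$ by observing that any two special cliques meet in at most one vertex (so each ordered clique-pair is hit by at most one vertex), and combine to get $\ell \le R\sqrt{n}/\sqrt{r(r-1)}$. The exposition meanders through several dead ends before landing on this, but the eventual route and destination match the paper exactly.
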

\begin{proof}
Let $m=|\mcC|$ and let $N$ be the number of vertex--clique incidences.
Then $\ell m \le N \le nR$.  Let $T$ be the number of triples
$(v,C_1,C_2)$ where $C_1,C_2\in \mcC$ and $v \in C_1 \cap C_2$.
Then $T = \sum_{v \in V} \deg(v)(\deg(v)-1) \ge nr(r-1)$ (where $V$ is the set
of vertices).  On the other hand, by the intersection assumption, $T\le m(m-1)
< m^2$.  Comparing,
\[
    nr(r-1) < m^2 \le \left(\frac{nR}{\ell}\right)^2\,. \hfill\qedhere
\]
\end{proof}

\begin{proof}[Proof of Theorem~\ref{thm:lambda}]
\emph{Case 1.} Suppose $(3k+\lambda+1)(\mu-1) < (\lambda+1)^2$. Then by
Corollary~\ref{cor:metsch}, every edge lies in a special clique of order at
least $\ell := \lambda + 2 - (3/2)k(\mu-1)/(\lambda+1) > (1/2)(\lambda+1)$. The
number of special cliques containing a given vertex is at most $R :=
2k/(\lambda + 1)$, and at least $k/(\lambda + 1)$. Let $r = \lceil k/(\lambda +
1)\rceil$. Since $G$ is not a disjoint union of cliques, $\lambda + 1 < k$, so
$r \ge 2$. Applying Lemma~\ref{lem:hypergraph} gives $\ell \le
(R/\sqrt{r(r-1)})\sqrt{n} \le (R/r)\sqrt{2n}$. Hence, $\lambda+1 < 4\sqrt{2n}$.

\emph{Case 2.} Otherwise, $(\lambda+1)^2\le (3k+\lambda+1)(\mu-1)$. Set $\delta
=  (\sqrt{13}-1)/6$.

\emph{Case 2a.} Suppose $\mu-1 \ge \delta (\lambda+1)$. Then 
\begin{equation}  \label{eq:case2a}
\lambda +1 \le
(1/\delta)(\mu-1) < (1/\delta)\sqrt{k(\mu-1)}.
\end{equation}

\emph{Case 2b.} Otherwise, $\mu-1 < \delta(\lambda+1)$, and we have
\[
    (1-\delta)(\lambda+1)^2 < 3k(\mu-1),
\]
which is equivalent to Eq.~\eqref{eq:case2a} by our choice of $\delta$.
The Theorem follows by combining Eq.~\eqref{eq:case2a} with Case 1.
\end{proof}

\section{Proof of clique structure} \label{sec:clique}
We now give a simple proof of Theorem~\ref{thm:clique-asymptotic}.  
The core of the proof is the Clique Partition Lemma~\ref{lem:clique-local}
below; the lemma is a consequence of Metsch's~\cite[Theorem 1.2]{metsch1}.
The simplification results from our use of the
following lemma, implicit in Spielman~\cite[Lemma 17]{spielman}.

If $u$ is a vertex of a graph $G$, we write $N(u)$ for the neighborhood of $u$,
i.\,e., the set of vertices adjacent to $u$, and write $N^+(u) =
N(u)\cup\{u\}$.
\begin{lemma}[Spielman]\label{lem:spielman}
    Let $G$ be a graph on $k$ vertices which is regular of degree $\lambda$ and
    such that any pair of nonadjacent vertices has at most $\mu-1$ common
    neighbors. Then for any vertex $u$, there are at most
    $(k-\lambda-1)(\mu-1)$ ordered pairs of nonadjacent vertices in $N(u)$.
\end{lemma}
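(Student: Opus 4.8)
The plan is to fix a vertex $u$ and split the remaining vertices into $A = N(u)$ and $B = V(G) \setminus N^+(u)$, so that $|A| = \lambda$ and $|B| = k - \lambda - 1$. I will show that the number of ordered pairs of nonadjacent vertices in $A$ equals the number of edges between $A$ and $B$, and then bound the latter by $(k-\lambda-1)(\mu-1)$ using the hypothesis on common neighbors of nonadjacent pairs.

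For the identity, I fix $x \in A$ and count the $\lambda$ neighbors of $x$: exactly one of them is $u$; say $d$ of them lie in $A \setminus \{x\}$; and the remaining $\lambda - 1 - d$ lie in $B$. Then the number of non-neighbors of $x$ inside $A \setminus \{x\}$ is $(\lambda - 1) - d$, which is precisely the number of neighbors of $x$ in $B$. Summing over $x \in A$, the number of ordered nonadjacent pairs in $A$ equals $\sum_{x \in A}|N(x) \cap B|$, i.e.\ the number of edges between $A$ and $B$.

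To bound the number of edges between $A$ and $B$, I fix $z \in B$. Since $z \neq u$ and $z$ is not adjacent to $u$, the hypothesis gives that $z$ and $u$ have at most $\mu - 1$ common neighbors; but a common neighbor of $z$ and $u$ is exactly a neighbor of $z$ lying in $A = N(u)$, so $|N(z) \cap A| \le \mu - 1$. Summing over the $k - \lambda - 1$ vertices of $B$ bounds the number of edges between $A$ and $B$ by $(k-\lambda-1)(\mu-1)$, which together with the identity above proves the lemma. The only step requiring any thought is the double-counting identity (non-neighbors of $x$ within $N(u)$ are matched with neighbors of $x$ outside $N^+(u)$), and that is immediate from $\lambda$-regularity; everything else is a one-line use of the common-neighbor hypothesis.
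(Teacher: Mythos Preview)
Your proof is correct and follows essentially the same approach as the paper. The paper's quantity $P$ (ordered pairs $(x,y)$ with $(u,x,y)$ an induced path) is exactly your edge count between $A=N(u)$ and $B=V\setminus N^+(u)$; the paper derives $X=P$ from the global identity $K+P=\lambda(\lambda-1)=K+X$, while you obtain the same equality by the per-vertex count $(\lambda-1)-d$, and both then bound $P$ from the $B$-side via the common-neighbor hypothesis.
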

\begin{proof}
Let $X$ be the number of ordered pairs of nonadjacent vertices in $N(u)$, and
let $K$ be the number of ordered pairs of adjacent vertices in $N(u)$,
so $K + X = \lambda(\lambda-1)$. Let $P$ be the number of ordered pairs $(x,y)$
of vertices such that $(u,x,y)$ induces a path of length two ($u$ and $y$ are
not adjacent, and $x$ is adjacent to both). For every
neighbor $x$ of $u$, and every neighbor $y \ne u$ of $x$, the pair $(x,y)$ is
counted in either $K$ or $P$, so $K + P = \lambda(\lambda-1)$ and so $P = X$.
On the other hand, there are $k-\lambda-1$ vertices not adjacent to $u$, each
of which has at most $\mu-1$ common neighbors with $u$, and so $X = P \le
(k-\lambda-1)(\mu-1)$.
\end{proof}

\begin{lemma}[Clique Partition Lemma (Metsch)] \label{lem:clique-local}
    Let $G$ be a graph on $k$ vertices which is regular of degree $\lambda$
    and such that any pair of nonadjacent vertices has at most $\mu-1$ common
    neighbors. Suppose that $k\mu = o(\lambda^2)$. Then there is a
    partition of $V(G)$ into maximal cliques of order $\sim \lambda$, and all
    other maximal cliques of $G$ have order $o(\lambda)$.
\end{lemma}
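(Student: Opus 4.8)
The plan is to build the special cliques directly from almost-complete neighborhoods, using Spielman's Lemma~\ref{lem:spielman} in place of Metsch's inductive argument. Write $\eps = \eps(n) = k\mu/\lambda^2 \to 0$; since $k \ge \lambda + 1$ we have $\mu \le k\mu/\lambda = o(\lambda)$. If $\mu \le 1$ then nonadjacent vertices have no common neighbor, so $G$ is a disjoint union of cliques of order $\lambda + 1$ and there is nothing to prove; so assume $\mu \ge 2$, which forces $\lambda \to \infty$ (as $k\mu \ge 2(\lambda+1)$ while $k\mu = o(\lambda^2)$). All claims below hold for $n$ sufficiently large, with the estimates uniform.

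First I would produce, for each vertex $u$, a large clique $D_u \ni u$. By Spielman's Lemma~\ref{lem:spielman}, $N(u)$ contains at most $(k - \lambda - 1)(\mu - 1) \le k\mu = \eps\lambda^2$ ordered nonadjacent pairs. Call $w \in N(u)$ \emph{$u$-bad} if $w$ is nonadjacent to more than $\sqrt\eps\,\lambda$ vertices of $N(u)$; counting ordered nonadjacent pairs shows the set $B_u$ of $u$-bad vertices satisfies $|B_u| < \sqrt\eps\,\lambda$. Put $D_u = \{u\} \cup (N(u) \setminus B_u)$, so $|D_u| > (1 - \sqrt\eps)\lambda$. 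I claim $D_u$ is a clique: $u$ is adjacent to all of $N(u)$, and if $v, w \in N(u) \setminus B_u$ were nonadjacent, then inside the $\lambda$-element set $N(u)$ each of $v, w$ misses at most $\sqrt\eps\,\lambda$ vertices, so $v$ and $w$ would have more than $(1 - 3\sqrt\eps)\lambda$ common neighbors in $N(u)$ --- impossible, since a nonadjacent pair has at most $\mu - 1 = o(\lambda)$ common neighbors. As $D_u$ is a clique in a $\lambda$-regular graph, $(1 - \sqrt\eps)\lambda < |D_u| \le \lambda + 1$, hence $|D_u| \sim \lambda$.

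The second ingredient is the observation that \emph{any two maximal cliques $C_1, C_2$ with $|C_1 \cap C_2| \ge \mu$ coincide}: otherwise pick $x \in C_1 \setminus C_2$; then $x$ is adjacent to all of $C_1 \cap C_2$, so maximality of $C_2$ gives $y \in C_2$ with $x \not\sim y$, whence $y \notin C_1$, so $y$ too is adjacent to all of $C_1 \cap C_2$ --- giving the nonadjacent pair $\{x, y\}$ at least $\mu$ common neighbors, a contradiction. Since $|D_u| > \mu$, the clique $D_u$ lies in a \emph{unique} maximal clique $K_u$, and $(1 - \sqrt\eps)\lambda < |K_u| \le \lambda + 1$, so $|K_u| \sim \lambda$. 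If $v \in D_u$ then $D_u \setminus \{v\} \subseteq N(v)$ and $(D_u \setminus \{v\}) \setminus B_v \subseteq D_v$, so $|D_u \cap D_v| \ge |D_u| - 1 - |B_v| > (1 - 3\sqrt\eps)\lambda > \mu$, and hence $K_u = K_v$. It follows that each vertex $w$ lies in a unique maximal clique of order $\ge \tfrac34\lambda$ --- namely $K_w$, with uniqueness because two such cliques through $w$ meet in at least $\tfrac12\lambda - 1 > \mu$ vertices inside $N(w)$. Thus the cliques $K_u$ are pairwise disjoint, cover $V(G)$, and have order $\sim \lambda$: this is the required partition. Finally, if $C$ is a maximal clique distinct from every $K_u$, then choosing $v \in C$ gives $(C \setminus \{v\}) \setminus B_v \subseteq D_v \subseteq K_v$, so $|C \cap K_v| > |C| - 1 - \sqrt\eps\,\lambda$; but $C \ne K_v$, so $|C \cap K_v| < \mu$, whence $|C| < \mu + 1 + \sqrt\eps\,\lambda = o(\lambda)$.

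I expect the main obstacle to be showing that $D_u$ really is a clique. This is the one place where the two hypotheses genuinely combine --- Spielman's lemma bounds the number of bad neighbors, and the $o(\lambda)$ bound on common neighbors of a nonadjacent pair then forbids any missing edge among the good ones --- and it is also the step that truly requires the asymptotic hypothesis, since for fixed parameters $D_u$ need not be a clique (which is exactly why Metsch's non-asymptotic argument is more intricate). Everything else is bookkeeping: calibrating the thresholds $\sqrt\eps\,\lambda$ and $\tfrac34\lambda$ so that each inequality of the form ``$\cdots > \mu$'' holds once $n$ is large, and checking that the estimates on $|K_u|$ and on the remaining cliques are uniform in $u$ and $C$.
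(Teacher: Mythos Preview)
Your proof is correct and follows essentially the same route as the paper's: invoke Spielman's Lemma~\ref{lem:spielman} to bound the number of high-codegree vertices in $N^+(u)$, show the low-codegree vertices form a clique of order $\sim\lambda$, and deduce that every other maximal clique through $u$ is small. The only notable difference is the codegree threshold: the paper takes $\kappa=(\lambda-\mu)/2$, which makes the clique property immediate by pigeonhole (in any nonadjacent pair at least one endpoint has codegree $\ge\kappa$), whereas your threshold $\sqrt{\eps}\,\lambda$ requires the separate common-neighbor count you give---but this, along with your more explicit treatment of the partition, is a cosmetic variation on the same argument.
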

\begin{proof}
Fix a vertex $u$ and consider the induced subgraph $H$ of $G$ on
$N^+(u)$. Suppose $x$ and $y$ are distinct non-adjacent
vertices of $H$. They have at most $\mu-1$ common neighbors in $H$, 
so there are at least $\lambda - \mu$ vertices in $H\setminus\{x,y\}$ which are
not common neighbors of $x$ and $y$. Hence, at least one of $x$ and $y$ has
codegree at least $\kappa := (\lambda - \mu)/2$ in $H$ (i.\,e., degree at most
$\lambda - \kappa$). Let $D$ be the set of vertices in $H$ of codegree at least
$\kappa$, and let $C = H\setminus D$. It follows that $C$ is a clique, and
clearly $u \in C$.

Now by Lemma~\ref{lem:spielman}, $|D|\kappa < (k-\lambda-1)(\mu-1) =
o(\lambda^2)$, and so $|D| = o(\lambda)$. In particular, $C \sim \lambda$, and
every element of $D$ has at least one non-neighbor in $C$. Hence, $C$ is a
maximal clique, and every element not in $C$, having at least one non-neighbor
in $C$, has at most $\mu$ neighbors in $C$. Thus, any maximal clique which
contains $u$ as well as a vertex not in $C$ has order at most $|D| + \mu =
o(\lambda)$.  
\end{proof} 

Theorem~\ref{thm:clique-asymptotic} then follows immediately by applying
Lemma~\ref{lem:clique-local} to the graphs induced by $G$ on $N(u)$ for $u \in
V$. \qed

\section{Asymptotic Delsarte cliques}\label{sec:delsarte}

We finally give a prove of Theorem~\ref{thm:delsarte}.

Suppose that $G$ is distance-regular with intersection numbers $b_i, c_i$,
where for any pair $u,v$ of vertices $u$ at distance $i$, the number of
neighbors of $u$ at distance $i+1$ from $v$ is $b_i$ and the number of
neighbors of $u$ at distance $i-1$ from $v$ is $c_i$ (cf. \cite[Chap.
4.1]{bcn-drg}).  (Note that every distance-regular graph is sub-amply regular
with parameters $\lambda = b_0 - b_1 - 1$ and $\mu = c_2$.) 

\begin{lemma}\label{lem:lambda-s-bound}
Let $G$ be a distance-regular graph with least eigenvalue $s$. Then
\[
    \lambda + \frac{k}{\lambda} > \frac{k}{|s|}.
\]
\end{lemma}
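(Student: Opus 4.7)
The plan is to lower-bound $|s|$ via the Courant--Fischer variational characterization of eigenvalues, applied to a carefully chosen two-dimensional test subspace that captures the local structure around a single vertex.

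First, I would fix any vertex $u$ and consider the subspace $W \subseteq \mathbb{R}^{V(G)}$ spanned by the indicator $e_u$ and the characteristic vector $\chi_{N(u)}$ of its neighborhood. For a generic $x = \alpha e_u + \beta \chi_{N(u)}$, the $k$-regularity of $G$ together with the edge-regularity (each neighbor of $u$ has exactly $\lambda$ further neighbors in $N(u)$) gives the short computation
\[
    x^\top A x = 2k\alpha\beta + k\lambda\beta^2, \qquad \|x\|^2 = \alpha^2 + k\beta^2.
\]
Minimizing $R(x) = x^\top A x/\|x\|^2$ over $W$ is thus a $2 \times 2$ generalized eigenvalue problem whose characteristic equation simplifies to $\theta^2 - \lambda\theta - k = 0$; the smaller root is $\theta_- := (\lambda - \sqrt{\lambda^2 + 4k})/2 < 0$. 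Since $s \leq R(y)$ for every nonzero $y$, in particular $s \leq \theta_-$, so $|s| \geq |\theta_-|$.

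The second step is an algebraic payoff using the identity $|\theta_-|(|\theta_-| + \lambda) = k$ (immediate from $\theta_-^2 = \lambda\theta_- + k$). Rewriting as $k/|\theta_-| = |\theta_-| + \lambda$, I obtain
\[
    \frac{k}{|s|} \;\leq\; \frac{k}{|\theta_-|} \;=\; |\theta_-| + \lambda \;<\; \lambda + \frac{k}{\lambda},
\]
where the final strict inequality is equivalent to $\lambda|\theta_-| < k$, which holds because $k = |\theta_-|(|\theta_-| + \lambda) > \lambda|\theta_-|$ as soon as $|\theta_-| > 0$ (i.e., whenever $G$ has an edge).

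I do not expect any serious obstacle: the Rayleigh quotient setup is routine, and the closing step is a one-line factoring. The only bit of genuine insight is choosing the right two-dimensional test subspace. I note that distance-regularity is never invoked beyond the parameters $k$ and $\lambda$, so the same argument actually establishes the bound for any edge-regular graph.
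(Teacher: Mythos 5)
Your proof is correct, and it takes a genuinely different route from the paper's. The paper works inside the Bose--Mesner framework: it writes down the standard sequence of polynomials $u_0, u_1, u_2$ for a distance-regular graph, invokes the sign-change theorem (\cite[Corollary 4.1.2]{bcn-drg}) to conclude $u_2(s) > 0$, and feeds this into the three-term recurrence at $x = s$ to obtain $\lambda - s > k/|s|$, from which the stated bound follows by a short case split. Your argument instead compresses the adjacency matrix onto the two-dimensional subspace $\mathrm{span}\{e_u, \chi_{N(u)}\}$; in the orthonormal basis $\{e_u, \chi_{N(u)}/\sqrt{k}\}$ this compression is $\left(\begin{smallmatrix} 0 & \sqrt{k} \\ \sqrt{k} & \lambda \end{smallmatrix}\right)$, whose smaller eigenvalue $\theta_-$ satisfies $\theta_-^2 - \lambda\theta_- - k = 0$, and Cauchy interlacing gives $s \le \theta_-$. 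The resulting inequality $s^2 - \lambda s - k \ge 0$ is, of course, exactly what the paper's $u_2(s) > 0$ encodes — the two approaches are different derivations of the same quadratic inequality in $s$ — but yours buys genuine extra generality, as you correctly observe: only $k$-regularity and edge-regularity (constant $\lambda$) are used, so the bound holds for all edge-regular graphs, not just distance-regular ones, and the proof is elementary (no standard sequence, no sign-change theorem). Your closing factoring step ($k/|\theta_-| = |\theta_-| + \lambda < \lambda + k/\lambda$ because $k = |\theta_-|(|\theta_-|+\lambda) > \lambda|\theta_-|$) is also slicker than the paper's case split on whether $\lambda \le k/|s|$. One could sharpen the statement of your observation about generality: the implicit hypothesis $\lambda > 0$ is needed for $k/\lambda$ to make sense, and $G$ must have an edge so that $\theta_- < 0$, but both are satisfied whenever the lemma's conclusion is meaningful.
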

\begin{proof}
Let $\{u_0, u_1,\ldots,u_d\}$ be the standard sequence of polynomials for $G$
(see, e.g.,~\cite[Section 4.1B]{bcn-drg}). It is well known that $u_0(x) = 1$,
$u_1(x) = x/k$, and
\begin{equation}\label{eq:stdseq}
    c_1 u_0(x) + a_1 u_1(x) + b_1 u_2(x) = x u_1(x)
\end{equation}
(cf. eq.~(13) in~\cite[Section 4.1B]{bcn-drg}). 
Furthermore, if $\theta_i$ is the $i$th greatest eigenvalue of $G$, then the
sequence $\{u_0(\theta_i),u_1(\theta_i), \ldots, u_d(\theta_i)\}$ has exactly
$i$ sign changes~\cite[Corollary~4.1.2]{bcn-drg}. In particular, the sequence
$\{u_0(s), u_1(s),\ldots, u_d(s)\}$ is alternating, and so $u_2(s) > 0$. Hence,
from Eq.~\eqref{eq:stdseq},
\[
    \lambda -s= \frac{k}{-s} + \frac{k^2}{-s}u_2(s) > \frac{k}{-s}.
\]
So, if $\lambda \le k/|s|$, then 
$\lambda + k/\lambda > \lambda - s > k/|s|$. Thus, in any
case, $\lambda + k/\lambda > k/|s|$.   
\end{proof}

\noindent
We note that Lemma~\ref{lem:lambda-s-bound} is a slight improvement
over Lemma 3.2 of~\cite{bang-koolen} which states $\lambda+|s|>k/|s|$.
The method of proof is virtually identical.

\begin{proof}[Proof of Theorem~\ref{thm:delsarte}]
    Since $|s|\mu = o(\lambda)$, by Lemma~\ref{lem:lambda-s-bound}, we have
    \[
        k\mu < |s|\mu\left(\lambda + \frac{k}{\lambda}\right) = o(\lambda^2 + k).
    \]
    We therefore have $k\mu = o(\lambda^2)$, so by
    Theorem~\ref{thm:clique-asymptotic}, $G$ has a clique geometry $\mcC$ with
    special cliques of order $\sim \lambda$. By Lemma~\ref{lem:delsarte-bound},
    we have $\lambda \lesssim 1 + k/|s|$.  But since $\lambda \gtrsim k/|s|$ by
    Lemma~\ref{lem:lambda-s-bound}, it follows that $k/|s|$ is unbounded and
    the special cliques have order $\sim k/|s|$.   
\end{proof}

\section{Bounding the parameters of \sr\ graphs}\label{sec:srg}
A \sr\ graph with parameters $(n,k,\lambda,\mu)$ is a $k$-regular
graph on $n$ vertices such that any two adjacent vertices have exactly
$\lambda$ common neighbors, and any two distinct nonadjacent vertices have
exactly $\mu$ common neighbors. Hence, \sr\ graphs are sub-amply regular, and
indeed distance-regular if connected.  In the special case of strongly regular
graphs, we derive from our bound on $\lambda$ a bound on the nonprincipal
positive eigenvalue $r$. We compare our bounds on $\lambda$ and $r$ to those of
Spielman and of Pyber for \sr\ graphs.

Throughout this section, every \sr\ graph will have parameters
$(n,k,\lambda,\mu)$ and eigenvalues $k \ge r > s$.

\subsection{Bound on $r$}

We observe that a bound on $\lambda$ entails a corresponding bound on $r$.

We use the following
standard observations (cf.~\cite[Ch. 1.3]{bcn-drg}).
\begin{proposition}\label{prop:basic-srg}
Let $G$ be a \sr\ graph. 
\begin{itemize}
\item[(i)] $(n-k-1)\mu = k(k-\lambda-1)$
\item[(ii)] $-rs = k - \mu$
\item[(iii)] $r + s = \lambda - \mu$
\end{itemize}
\end{proposition}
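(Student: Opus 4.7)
My plan is to handle part (i) by a double-counting argument at a fixed vertex, and then to derive parts (ii) and (iii) simultaneously from the quadratic relation satisfied by the adjacency matrix of a \sr\ graph.

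For (i), I would fix a vertex $v$ and count ordered pairs $(u,w)$ with $u \in N(v)$, $w \notin N^+(v)$, and $uw$ an edge of $G$. Counting first by $u$: each neighbor $u$ of $v$ has $k$ neighbors altogether, of which $v$ is one and $\lambda$ are common neighbors of $v$; the remaining $k - 1 - \lambda$ neighbors of $u$ contribute to the count. Counting first by $w$: each of the $n - k - 1$ vertices not in $N^+(v)$ contributes exactly $\mu$, by the defining property of $\mu$ for a \sr\ graph. Equating the two counts gives $(n-k-1)\mu = k(k-\lambda-1)$.

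For (ii) and (iii), I would pass to the adjacency matrix $A$ of $G$ and the all-ones matrix $J$. The defining conditions translate to the matrix identity
\[ A^2 = kI + \lambda A + \mu(J - I - A), \]
which I would rewrite as $A^2 - (\lambda - \mu) A - (k - \mu) I = \mu J$. On the orthogonal complement of the all-ones vector, $J$ acts as zero, so every eigenvalue $\theta$ of $A$ arising from that subspace satisfies the scalar relation $\theta^2 - (\lambda - \mu)\theta - (k - \mu) = 0$. Since the nonprincipal eigenvalues $r$ and $s$ are by definition exactly the two roots of this quadratic, Vieta's formulas immediately give $r + s = \lambda - \mu$ and $rs = -(k - \mu)$, establishing (iii) and (ii).

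I do not expect a real obstacle here, as all three identities are classical. The only matter of hygiene is to confirm that $r$ and $s$ genuinely appear as eigenvalues on the all-ones-orthogonal subspace, which is standard and fails only in the degenerate case $\mu = 0$, where $G$ is a disjoint union of cliques and the identities collapse to trivialities.
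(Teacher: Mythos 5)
Your proof is correct and is the standard textbook derivation; the paper itself does not prove Proposition~\ref{prop:basic-srg} but simply cites it as a well-known fact from~\cite[Ch.~1.3]{bcn-drg}. Both your double-counting argument for (i) and the minimal-polynomial/Vieta argument for (ii)--(iii) on the all-ones-orthogonal subspace match the classical treatment in that reference.
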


\begin{corollary}\label{cor:r-bound}
Let $G$ be a \sr\ graph. Then
\[
    r < \max\left\{4\sqrt{2n}, \frac{6}{\sqrt{13}-1}\sqrt{k(\mu-1)}\right\} +
    \sqrt{k}. 
\]
\end{corollary}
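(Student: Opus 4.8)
The plan is to derive the bound on $r$ directly from Theorem~\ref{thm:lambda} together with the eigenvalue identities of Proposition~\ref{prop:basic-srg}. First I would observe that by (iii), $r = \lambda - \mu - s$, and since $s < 0$ and $\mu \ge 1$ (we may assume $\mu \ge 2$, as otherwise the graph is a disjoint union of cliques and the statement is vacuous or trivial), we get $r \le \lambda - s = \lambda + |s|$. So it suffices to bound $\lambda$ by the max-expression and $|s|$ by $\sqrt{k}$.

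The bound on $\lambda$ is immediate: a connected \sr\ graph with $\mu \ge 2$ is not a disjoint union of cliques, so Theorem~\ref{thm:lambda} applies and gives $\lambda + 1 < \max\{4\sqrt{2n}, \frac{6}{\sqrt{13}-1}\sqrt{k(\mu-1)}\}$, hence $\lambda < \max\{\ldots\}$. For the bound on $|s|$, I would use (ii): $|s|\cdot r = -rs = k - \mu < k$, and since $r \ge 1$ for any non-complete graph (indeed $r > 0$ by hypothesis), we get $|s| = (k-\mu)/r < k/r \le k$; but this only gives $|s| < k$, which is too weak. The sharper route is to note $|s| \le \sqrt{|s| \cdot r} = \sqrt{k-\mu} < \sqrt{k}$, using $|s| \le r$ (the least eigenvalue in absolute value is at most the positive nonprincipal eigenvalue — wait, this need not hold).

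Let me reconsider: the clean inequality is $|s| \le \sqrt{-rs}$ precisely when $|s| \le r$, which fails in general (e.g.\ Petersen graph has $r = 1$, $s = -2$). So instead I would argue as follows. From $r = \lambda - \mu - s$ we get $|s| = s \cdot(-1)$ and $r + |s| = \lambda - \mu + 2|s|$; this is getting circular. The robust approach: we have $r \le \lambda + |s|$ as above, and separately $r \cdot |s| = k - \mu < k$. If $|s| \le \sqrt{k}$ we are done combining with the $\lambda$ bound. If $|s| > \sqrt{k}$, then $r = (k-\mu)/|s| < k/\sqrt{k} = \sqrt{k}$, and then $r < \sqrt{k} < \max\{4\sqrt{2n}, \ldots\} + \sqrt{k}$ trivially. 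So in all cases $r < \lambda + \sqrt{k} < \max\{4\sqrt{2n}, \frac{6}{\sqrt{13}-1}\sqrt{k(\mu-1)}\} + \sqrt{k}$.

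The main obstacle, such as it is, lies in handling the degenerate cases cleanly: when $\mu \le 1$ the \sr\ graph (or its complement) may be a union of cliques and Theorem~\ref{thm:lambda} does not apply — but then $r \le 0$ or the bound holds trivially since the right-hand side is positive, so these cases cause no real difficulty. One should also confirm $r \ge 1$ is not actually needed in the winning branch of the case analysis above; indeed the case split on whether $|s| \le \sqrt{k}$ is self-contained and uses only $r|s| = k - \mu < k$ and $r \le \lambda + |s|$, both of which hold for any connected \sr\ graph that is not complete. I would write the proof as this short two-case argument, spending at most a few lines on the boundary cases.
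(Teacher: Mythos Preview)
Your final argument is correct and is exactly the paper's proof: split on whether $|s|\le\sqrt{k}$ or $|s|>\sqrt{k}$, using $r|s|=k-\mu<k$ from Proposition~\ref{prop:basic-srg}(ii) in the latter case to get $r<\sqrt{k}$, and $r=\lambda-\mu+|s|\le\lambda+|s|$ from (iii) together with Theorem~\ref{thm:lambda} in the former. The meandering before you settle on this (the attempt via $|s|\le r$, etc.) should simply be deleted from the written proof, and your handling of the degenerate $\mu\le 1$ cases is fine.
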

\begin{proof}
By Proposition~\ref{prop:basic-srg}~(ii), we have $r < k/(-s)$, so if $-s \ge
\sqrt{k}$, the inequality is immediate. Otherwise, $-s
\le \sqrt{k}$, and so the inequality follows from
Proposition~\ref{prop:basic-srg}~(iii) and Theorem~\ref{thm:lambda}. 
\end{proof}

\subsection{Spielman's bounds for \sr\ graphs}\label{subsec:spielman}
We will state Neumaier's classification of \sr\ graphs~\cite{neumaier79}, along
with its asymptotic consequences to the parameters of \sr\ graphs, derived by
Spielman~\cite{spielman}.

A \emph{partial geometry} $\mfX = (\mcP,\mcL)$ with parameters $(R,K,\alpha)$, where
$R,K \ge 2$, is a geometric 1-design with parameters $R,K$ with the property
that for every nonincident pair $(p,\ell) \in \mcP\times \mcL$, there are
exactly $\alpha$ lines containing $p$ that intersect $\ell$.  Examples of
partial geometries include \emph{Steiner 2-designs}, which are the partial
geometries with $\alpha=K$, and \emph{transversal designs}, which are the
partial geometries with $\alpha = K-1$. The \emph{dual} of a partial geometry
$(\mcP,\mcL)$ with parameters $(R,K,\alpha)$ is the incidence structure
$(\mcL,\mcP)$. It is a partial geometry with parameters $(K,R,\alpha)$. The
\emph{line-graph} of a partial geometry is the point-graph of its dual.

Every line-graph (or point-graph) of a partial geometry is \sr, and the
geometric \sr\ graphs are point-graphs (hence line-graphs) of partial
geometries.  Other examples of \sr\ graphs include disjoint unions of cliques
of equal order and the complements of such graphs (we call these two types
\emph{trivial}); and \emph{conference graphs}, which have parameters $(n,
(n-1)/2, (n-5)/4), (n-1)/4)$.   All \sr\ graphs with a non-integral 
eigenvalue are conference graphs.

\begin{theorem}[Neumaier~{\cite{neumaier79}}]\label{thm:neumaier}
    Any \sr\ graph $G$
    is one of the following types: 
    (i) trivial; (ii) the line-graph
    of a Steiner 2-design or the line-graph of a
    transversal design; (iii) a conference graph; or
    (iv) $G$ satisfies the inequality
\begin{equation}\label{eq:claw}
r \le \max\left\{2(-s-1)(\mu + 1 + s) + s, \frac{s(s+1)(\mu+1)}{2} - 1\right\}
\end{equation}
\end{theorem}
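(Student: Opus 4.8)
The plan follows the architecture of Neumaier's original argument: peel off the two sporadic families, then show that a \sr\ graph with integral eigenvalues and sufficiently large $r$ (equivalently large $\lambda$, since $\lambda=r+s+\mu$ by Proposition~\ref{prop:basic-srg}(iii)) must be the line-graph of the dual of a partial geometry assembled from its Delsarte cliques, and finally classify that partial geometry so that its two near-extremal cases give the Steiner and transversal families while all others satisfy~\eqref{eq:claw}. The sporadic families are easy: if $r$ or $s$ is non-integral then $r$ and $s$ are algebraic conjugates, so they occur with equal multiplicity in the spectrum, and the multiplicity formula then forces $2k+(n-1)(\lambda-\mu)=0$, which with the remaining feasibility conditions yields the conference parameters (case~(iii)); and the cases $s=-m$ with $m\le 1$, the case $\mu=0$, and their complements give the trivial graphs (case~(i)). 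So assume $s=-m$ with $m\ge 2$ an integer and $G$ connected and noncomplete.

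Assuming~\eqref{eq:claw} fails, I would first build a clique geometry. By Lemma~\ref{lem:delsarte-bound} every clique of $G$ has order at most $1+k/m$; call one achieving this bound \emph{Delsarte}. A short computation using $k=\mu+rm$ and $\lambda+1=\mu+r-m+1$ shows that the hypothesis that $r$ exceeds the \emph{second} term of~\eqref{eq:claw} is exactly equivalent to the second hypothesis $k<(m+1)(\lambda+1)-\tfrac12 m(m+1)(\mu-1)$ of Theorem~\ref{thm:metsch} with $t=m$, while the hypothesis that $r$ exceeds the \emph{first} term supplies a sharper substitute (tailored to \sr\ graphs) for its first hypothesis. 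A Bose--Neumaier geometrization argument---of which Theorem~\ref{thm:metsch} is a more general but weaker-constant descendant---then shows the Delsarte cliques form a clique geometry $\mcC$ with at most $m$ special cliques through each vertex; double counting the vertex--clique incidences against the $k$ neighbours of a fixed vertex upgrades this to exactly $m$ special cliques per vertex, each of order exactly $1+k/m$.

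Next I would recognize $\mcC$ as a partial geometry. The incidence structure $\mfX=(\mcP,\mcL)$ with $\mcP=V(G)$ and $\mcL=\mcC$ is a partial linear space carrying a line on every edge, with $m$ lines per point, $1+k/m$ points per line, and collinearity graph $G$; counting common neighbours of an adjacent (respectively nonadjacent) pair of vertices shows that the number $\alpha$ of lines through a point meeting a given disjoint line is independent of the pair, so $\mfX$ is a partial geometry and $G$ is the line-graph of its dual. Since $1\le\alpha\le\min\{1+k/m,\,m\}$, the case $\alpha=m$ forces $\mfX$ to be the dual of a Steiner $2$-design and $\alpha=m-1$ forces it to be the dual of a transversal design, so in both cases $G$ lands in case~(ii); for every smaller value of $\alpha$ (generalized quadrangles when $\alpha=1$, proper partial geometries otherwise) I would invoke the classical diophantine constraints on partial-geometry parameters---integrality and nonnegativity of the eigenvalue multiplicities, together with the Krein and Higman-type inequalities---to bound $k/m$ in terms of $\mu=\alpha m$ and $m$, and rewriting this bound via $r=\lambda-\mu+m$ reproduces~\eqref{eq:claw}, contradicting our standing assumption. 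Hence a failure of~\eqref{eq:claw} places $G$ in case~(ii).

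I expect the geometrization step to be the main obstacle: converting ``$\lambda$ large'' into an honest partition of each neighbourhood $N(v)$ into $m$ cliques. Since the first hypothesis of Theorem~\ref{thm:metsch} is slightly stronger than what the failure of~\eqref{eq:claw} delivers, the extremal count on $N(v)$---bounding the size of claws and applying a Spielman-type estimate (Lemma~\ref{lem:spielman}) with $t=m$---must be run directly, and it is there that the precise constants in~\eqref{eq:claw} get pinned down. The case $m=2$ would require separate treatment via Seidel's classification of graphs with least eigenvalue $-2$.
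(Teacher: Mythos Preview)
The paper does not prove Theorem~\ref{thm:neumaier}; it is quoted from Neumaier~\cite{neumaier79} and used as a black box to derive Theorem~\ref{thm:spielman-params}. There is therefore no ``paper's own proof'' to compare your proposal against.

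For what it is worth, your outline tracks the architecture of Neumaier's original argument (dispose of conference and trivial graphs; when the claw bound fails, geometrize via Delsarte cliques; then classify the resulting partial geometry so that $\alpha=m$ and $\alpha=m-1$ give the Steiner and transversal cases). You are right to flag the geometrization step as the crux: the constants in~\eqref{eq:claw} are pinned down by a neighbourhood-partition count sharper than what Theorem~\ref{thm:metsch} with $t=m$ gives, and the case $m=2$ does go through Seidel's classification. But none of that belongs to the present paper, which treats the theorem as an imported result.
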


Inequality~\eqref{eq:claw} is called the ``claw bound.'' 

The following consequences of Neumaier's classification
are implicit in Spielman's paper on testing isomorphism of \sr\ 
graphs~\cite{spielman}.

\begin{theorem}[Spielman~\cite{spielman}]\label{thm:spielman-params}
Let $G$ be a nontrivial \sr\ graph satisfying inequality~\eqref{eq:claw} (the
claw bound). Then
\begin{enumerate}
    \item[(a)] $r < k^{2/3}(\mu+1)^{1/3}$;
    \item[(b)] $\lambda < k^{2/3}(\mu+1)^{1/3}$;
\end{enumerate}
Assume furthermore that $k = o(n)$. Then
\begin{enumerate}
    \item[(c)] $\lambda = o(k)$;
    \item[(d)] $\mu \sim k^2/n$.
\end{enumerate}
\end{theorem}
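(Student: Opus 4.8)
The plan is to deduce (a) and (b) from the claw bound~\eqref{eq:claw} together with the identities of Proposition~\ref{prop:basic-srg}, and then to read off (c) and (d) from identity~(i) and the hypothesis $k=o(n)$ --- indeed (c) and (d) are essentially equivalent given~(i). Throughout write $\sigma:=-s=|s|$, so that Proposition~\ref{prop:basic-srg}~(ii),(iii) read $r\sigma=k-\mu$ and $\lambda=\mu+r-\sigma$. Since $G$ is nontrivial we have $\sigma\ge 2$ (otherwise $\lambda=k-1$ and $G$ is a disjoint union of cliques) and $\mu<k$ (otherwise $G$ is complete multipartite); in particular $r\sigma=k-\mu>0$, so $r\ge 1$.

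For (a) I would substitute $\sigma=(k-\mu)/r$ into~\eqref{eq:claw}. If $r$ does not exceed the second term of the maximum, then $r<\tfrac12\sigma^2(\mu+1)=\tfrac12(k-\mu)^2(\mu+1)/r^2<\tfrac12 k^2(\mu+1)/r^2$, whence $r^3<\tfrac12 k^2(\mu+1)$. If instead $r$ does not exceed the first term, then from $(\sigma-1)^2+\mu\ge 0$ we get $2(\sigma-1)(\mu+1-\sigma)\le 2\sigma\mu$, hence $r<2\sigma\mu$ and so $r^2<2\mu(\sigma r)=2\mu(k-\mu)$; since $\sigma\ge 2$ gives $r=(k-\mu)/\sigma\le k/2$, this yields $r^3<\tfrac k2\cdot 2\mu(k-\mu)<k^2\mu$. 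In both cases $r^3<k^2(\mu+1)$, i.e.\ $r<k^{2/3}(\mu+1)^{1/3}$, which is (a).

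For (b) I would first dispose of the case $\sigma\ge\min\{r,\mu\}$: then $\lambda=\mu+r-\sigma\le\max\{r,\mu\}$, and we are done, since $r<k^{2/3}(\mu+1)^{1/3}$ by (a), while $\mu<k$ forces $\mu^3<k\mu^2<k^2\mu<k^2(\mu+1)$, i.e.\ $\mu<k^{2/3}(\mu+1)^{1/3}$. In the remaining case $\sigma<r$ and $\sigma<\mu$ (so in particular $\lambda>\mu$), one again substitutes the claw bound into $\lambda=\mu+r-\sigma$ and checks that $\lambda^3<k^2(\mu+1)$, but now the bound is close to tight and the verification must be broken up according to the relative sizes of $\sigma$, $\mu$, and $r$. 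This last case is the step I expect to be the main obstacle; it appears to need slightly more than the raw inequality~\eqref{eq:claw} --- for instance, the feasibility of the parameter set (or Neumaier's classification) to rule out the near-extremal possibilities.

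Finally, (c) and (d) follow quickly. By Proposition~\ref{prop:basic-srg}~(i), $\mu(n-k-1)=k(k-\lambda-1)<k^2$, so $\mu<k^2/(n-k-1)$; since $k=o(n)$ we have $n-k-1\sim n$, hence $\mu<k^2/(n-k-1)\sim k^2/n=o(k)$, giving $\mu=o(k)$ (note $k\to\infty$, as otherwise $\mu\to 0$, impossible for a nontrivial graph). Then by (b), $\lambda<k^{2/3}(\mu+1)^{1/3}=k\cdot\bigl((\mu+1)/k\bigr)^{1/3}=o(k)$, which is (c). Rearranging~(i) once more, $\mu=k(k-\lambda-1)/(n-k-1)$; since $\lambda=o(k)$ we have $k-\lambda-1\sim k$ and $n-k-1\sim n$, so $\mu\sim k^2/n$, which is (d).
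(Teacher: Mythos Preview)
Your arguments for (a), (c), and (d) are correct; for (c) and (d) they match the paper's essentially verbatim. The genuine gap is (b), which you explicitly leave incomplete in the case $\sigma<\min\{r,\mu\}$.

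For (a), the paper avoids your two-case split with a single observation: since $s\le -1$ for any strongly regular graph, \emph{both} branches of the maximum in~\eqref{eq:claw} are bounded above by $s^2(\mu+1)$ (a two-line check). So the claw bound collapses to the uniform inequality $r\le s^2(\mu+1)$; substituting $|s|=(k-\mu)/r$ gives $r^3\le (k-\mu)^2(\mu+1)$, hence $r\le (k-\mu)^{2/3}(\mu+1)^{1/3}$, which is~(a). This is exactly your ``second term'' case, now applied without exception; your separate treatment of the first branch is unnecessary.

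For (b), the paper does no case analysis at all. From $s<0$ and Proposition~\ref{prop:basic-srg}(iii) one has $\lambda=\mu+r+s<\mu+r$, and the paper simply combines this with the bound $r\le (k-\mu)^{2/3}(\mu+1)^{1/3}$ just obtained to write $\lambda<r+\mu<k^{2/3}(\mu+1)^{1/3}$. So your anticipated ``main obstacle'' in the regime $\sigma<\min\{r,\mu\}$ is an artifact of the case decomposition, and no appeal to parameter feasibility or to Neumaier's classification beyond~\eqref{eq:claw} is made. (The last inequality $r+\mu<k^{2/3}(\mu+1)^{1/3}$ is asserted in the paper without further comment; it relies on the $(k-\mu)^{2/3}$ rather than the $k^{2/3}$ form of the bound on $r$, and you may wish to supply the missing justification yourself.)
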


Spielman explicitly states (c).  For the reader's convenience, we now give an
organized presentation of a proof of the full statement of
Theorem~\ref{thm:spielman-params}.

\begin{proof}[Proof of Theorem~\ref{thm:spielman-params}]
For any strongly regular graph, $s \le -1$ (see, e.g.,~\cite[Corollary
3.5.4]{bcn-drg}). Therefore $2(-s-1)(\mu + 1 + s)+s \le s^2(\mu+1)$, and so,
assuming the claw bound, we have
\begin{equation}\label{eq:neumaier-brief}
r \le s^2(\mu +1).
\end{equation}
Combining this with $k -\mu= -rs$ from Proposition~\ref{prop:basic-srg}~(ii)
gives
\[
r \le \left(\frac{k-\mu}{r}\right)^2(\mu+1),
\]
and hence, multiply both sides by $r^2$ and taking the $1/3$ power,
\[
r \le (k-\mu)^{2/3}(\mu+1)^{1/3}\,,
\]
proving part~(a) of the Theorem. But then combining the bound on $r$ above with
Proposition~\ref{prop:basic-srg}~(iii) we have
\[
\lambda < r + \mu < k^{2/3}(\mu+1)^{1/3}\,,
\]
proving part~(b) of the Theorem.

Now if $k = o(n)$, then $\mu = o(k)$ from Proposition~\ref{prop:basic-srg}~(i).
Then $\lambda = o(k)$ from part~(b) of the Theorem, giving part~(c). But then
part~(d) follows directly from Proposition~\ref{prop:basic-srg}~(i).
\end{proof}

\subsection{Comparison of bounds}

In this section we will compare our bounds on $\lambda$ and $r$ to those of
Spielman, as well as those of Pyber~\cite{pyber}, which we now state.

\begin{theorem}[Pyber]\label{thm:pyber-params}
Let $G$ be a nontrivial \sr\ graph. Then
\begin{enumerate}
    \item[(a)] $r < n^{1/4}k^{1/2}$;
    \item[(b)] $\lambda < n^{1/4}k^{1/2} + \mu$.
\end{enumerate}
\end{theorem}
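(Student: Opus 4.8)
The plan is to deduce (a) from one trace identity together with the classical \emph{absolute bound} on eigenvalue multiplicities, and then to read off (b) directly from Proposition~\ref{prop:basic-srg}.

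First I would set up the spectral data. Since $G$ is nontrivial it is primitive (an imprimitive \sr{} graph is a disjoint union of equal cliques or the complement of one, hence trivial), so its adjacency matrix $A$ has exactly three eigenvalues $k > r > s$, with multiplicities $1$, $f$, and $g$, and $f + g = n - 1$. The sum of the degrees gives $\operatorname{tr}(A^2) = nk$, while the sum of the squared eigenvalues gives $\operatorname{tr}(A^2) = k^2 + fr^2 + gs^2$; hence
\[
    fr^2 \le fr^2 + gs^2 = k(n-k) < kn.
\]
The substantial input is a lower bound on $f$: primitivity lets me invoke the absolute bound for the $r$-eigenspace, which yields $f \gtrsim \sqrt{2n}$, and in particular $f \ge \sqrt n$ for all but a handful of tiny \sr{} graphs, which are checked by hand (see~\cite{bcn-drg}). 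Plugging $f \ge \sqrt n$ into the displayed inequality gives $r^2 < kn/f \le k\sqrt n$, i.\,e., $r < n^{1/4}k^{1/2}$, proving (a).

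For (b) I would use Proposition~\ref{prop:basic-srg}(iii): from $r + s = \lambda - \mu$ and $s < 0$ we get $\lambda = r + s + \mu < r + \mu$, and (a) then gives $\lambda < n^{1/4}k^{1/2} + \mu$.

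The only real obstacle is the lower bound $f \gtrsim \sqrt n$. The elementary relations $f + g = n - 1$ and $fr^2 + gs^2 = k(n-k)$ only tie $f$ and $g$ to one another and do not by themselves keep $f$ from being small, so one genuinely needs the absolute bound --- equivalently, the rank bound on the Schur (Hadamard) square of the $r$-eigenspace. Granting that, the rest is a two-line computation, and the small-$n$ bookkeeping needed to keep the inequalities strict is routine.
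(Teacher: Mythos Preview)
The paper does not actually prove Theorem~\ref{thm:pyber-params}; it merely quotes the result from Pyber~\cite{pyber} for comparison purposes, so there is no ``paper's own proof'' to match.

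Your argument is correct and is, in fact, essentially Pyber's. The two ingredients are exactly the ones you identify: the trace identity $fr^2 + gs^2 = k(n-k) < kn$, and the absolute bound $n \le f(f+3)/2$ (valid for primitive \sr{} graphs), which yields $f \ge \sqrt{n}$ once $n \ge 9$; the only smaller primitive \sr{} graph is the pentagon, where (a) is immediate. Part~(b) then follows from Proposition~\ref{prop:basic-srg}(iii) as you say. The one place to be careful is that you need strict inequality in $fr^2 < kn$: this holds because $gs^2 > 0$ for a nontrivial \sr{} graph (both $g \ge 1$ and $s < 0$), so $fr^2 < k(n-k) < kn$ is genuinely strict. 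Your sketch handles this correctly.
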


We summarize the combination of our bound on $r$ with those of Spielman and
Pyber over the full range of possible degrees $k$. Let
\[
    g(n,k) = \min\left\{\left(\frac{k}{n}\right)^{4/3},\
                        \frac{k^{1/2}}{n^{3/4}}\,,\
        \max\left\{\left(\frac{k}{n}\right)^{3/2},\
            \left(\frac{1}{n}\right)^{1/2}\right\}\right\}.
\]

We assume $k\le (n-1)/2$ (otherwise we can take the complement of $G$). Using
the estimate $\mu = O(k^2/n)$ from Proposition~\ref{prop:basic-srg}~(i), the
following is immediate from Corollary~\ref{cor:r-bound} and
Theorems~\ref{thm:spielman-params}~(a) and \ref{thm:pyber-params}~(a).
\begin{theorem}
Let $G$ be a \sr\ graph with parameters $(n,k,\lambda,\mu)$ and eigenvalues $k
\ge r > s$ satisfying Eq.~\eqref{eq:claw} (the claw bound).
Then
\[
    \frac{r}{n} = O(g(n,k)).
\]
\end{theorem}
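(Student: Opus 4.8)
The plan is to assemble the bound from three known estimates on $r$---Spielman's (Theorem~\ref{thm:spielman-params}~(a)), Pyber's (Theorem~\ref{thm:pyber-params}~(a)), and our own (Corollary~\ref{cor:r-bound})---after converting each into a bound on $r/n$ by means of the elementary estimate $\mu = O(k^2/n)$. As usual we may assume $k \le (n-1)/2$ by passing to the complement, and that $G$ is nontrivial, so that Pyber's and Spielman's bounds apply (Spielman's additionally uses the claw bound~\eqref{eq:claw}, which is part of the hypothesis); since a nontrivial \sr\ graph is not a disjoint union of cliques, Theorem~\ref{thm:lambda}, and hence Corollary~\ref{cor:r-bound}, also applies.

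First I would record the estimate $\mu = O(k^2/n)$: by Proposition~\ref{prop:basic-srg}~(i), $(n-k-1)\mu = k(k-\lambda-1) \le k^2$, and $n-k-1 \ge (n-1)/2$ once $k \le (n-1)/2$, so $\mu = O(k^2/n)$.

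Next I would substitute this estimate into each of the three bounds and divide by $n$. Spielman's bound $r < k^{2/3}(\mu+1)^{1/3}$ becomes $r = O\!\big(k^{2/3}(k^2/n)^{1/3}\big) = O(k^{4/3}/n^{1/3})$, i.e.\ $r/n = O\!\big((k/n)^{4/3}\big)$; Pyber's bound $r < n^{1/4}k^{1/2}$ gives directly $r/n = O(k^{1/2}/n^{3/4})$; and in Corollary~\ref{cor:r-bound} the additive term $\sqrt{k}$ is $\le \sqrt{n}$ while $\sqrt{k(\mu-1)} = O\!\big((k\cdot k^2/n)^{1/2}\big) = O(k^{3/2}/n^{1/2})$, so $r = O\!\big(\max\{n^{1/2},\, k^{3/2}/n^{1/2}\}\big)$ and hence $r/n = O\!\big(\max\{(1/n)^{1/2},\, (k/n)^{3/2}\}\big)$. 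Taking the minimum of these three right-hand sides reproduces exactly the three arguments of the outermost $\min$ in the definition of $g(n,k)$, giving $r/n = O(g(n,k))$.

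There is no genuine obstacle here---the statement is, as the text asserts, immediate from the earlier results---so the ``hard part'' is only bookkeeping. One should confirm that the constant $4\sqrt{2n}$ and the additive $\sqrt{k}$ in Corollary~\ref{cor:r-bound} are absorbed into the stated $O$-bounds (both are, since $k \le n$), check that the degenerate small-$\mu$ cases do not cause trouble (if $\mu = 0$ then $G$ is a disjoint union of cliques, hence trivial and excluded; if $\mu = 1$ then $\sqrt{k(\mu-1)} = 0$ and the Corollary bound is simply $O(\sqrt{n})$, consistent with the claimed estimate), and note that the reduction to $k \le (n-1)/2$ serves only to reach that regime, since $g(n,k)$ is not symmetric under $k \mapsto n-1-k$.
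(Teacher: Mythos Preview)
Your proposal is correct and follows exactly the approach the paper indicates: assume $k\le (n-1)/2$, use $\mu = O(k^2/n)$ from Proposition~\ref{prop:basic-srg}~(i), and plug into the three bounds (Spielman, Pyber, Corollary~\ref{cor:r-bound}) to obtain the three arguments of the $\min$ defining $g(n,k)$. The paper states the result as ``immediate'' from these ingredients, and your write-up simply spells out the routine substitutions.
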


\begin{table}[h!]
\caption{Piecewise description of the function $g(n,k)$ giving the best known
bounds on $r/n$}\label{table:g}
\begin{centering}
\begin{tabular}{lcr}
    Value & Parameter range & Source \\
    \hline \\ [-4pt] 
    $(k/n)^{4/3}$ & $k\le n^{5/8}$ & Spielman~\cite{spielman} \\
    $n^{-1/2}$ & $n^{5/8}\le k\le n^{2/3}$ & this paper \\
    $(k/n)^{3/2}$ & $n^{2/3} \le k \le n^{3/4}$ & this paper \\
    $k^{1/2}n^{-3/4}$ & $k \ge n^{3/4}$ & Pyber~\cite{pyber} 
\end{tabular}

\end{centering}
\end{table}

Note that the function $g(n,k)$ is continuous
so up to constant factors the transition is continuous around the
boundaries of the intervals in Table~\ref{table:g}.

We now summarize the bounds on $\lambda$. Let
\[
    h(n,k) = \min\left\{\left(\frac{k}{n}\right)^{4/3},
   \ \max\left\{\frac{k^{1/2}}{n^{3/4}}\,,\ \left(\frac{k}{n}\right)^2\right\},
      \ \max\left\{\left(\frac{k}{n}\right)^{3/2},\
            \left(\frac{1}{n}\right)^{1/2}\right\}\right\}.
\]
\begin{theorem}
Let $G$ be a \sr\ graph with parameters $(n,k,\lambda,\mu)$ satisfying
Eq.~\eqref{eq:claw} (the claw bound).
Then
\[
    \frac{\lambda}{n} = O(h(n,k)).
\]
\end{theorem}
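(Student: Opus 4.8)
The plan is to assemble the bound on $\lambda/n$ from three already-established sources, mirroring the structure of the preceding theorem on $r/n$, and then verify that the piecewise minimum they give is precisely $h(n,k)$. First I would collect the three relevant bounds on $\lambda$: Spielman's bound (Theorem~\ref{thm:spielman-params}(b)) gives $\lambda < k^{2/3}(\mu+1)^{1/3}$; Pyber's bound (Theorem~\ref{thm:pyber-params}(b)) gives $\lambda < n^{1/4}k^{1/2} + \mu$; and our own Theorem~\ref{thm:lambda} gives $\lambda + 1 < \max\{4\sqrt{2n},\, (6/(\sqrt{13}-1))\sqrt{k(\mu-1)}\}$. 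In each case I would substitute the estimate $\mu = O(k^2/n)$ coming from Proposition~\ref{prop:basic-srg}(i) (valid since we assume $k \le (n-1)/2$, so $n-k-1 = \Theta(n)$ and hence $\mu = k(k-\lambda-1)/(n-k-1) = O(k^2/n)$).

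Next I would carry out the substitution in each of the three bounds. Spielman's becomes $\lambda = O(k^{2/3}(k^2/n)^{1/3}) = O(k^{4/3}/n^{1/3})$, i.e.\ $\lambda/n = O((k/n)^{4/3})$. For Pyber's, $\mu = O(k^2/n) = O(n^{1/4}k^{1/2})$ exactly when $k \le n^{3/2}$, which holds since $k \le n/2$; but to match $h$ I would keep the $\mu$ term separately, so $\lambda/n = O(\max\{k^{1/2}n^{-3/4},\, (k/n)^2\})$ — the second term being the contribution of $\mu/n = O(k^2/n^2)$. For our Theorem~\ref{thm:lambda}, the first branch contributes $\lambda/n = O(n^{-1/2})$ and the second, after substituting $\mu - 1 = O(k^2/n)$, contributes $O(\sqrt{k \cdot k^2/n}/n) = O((k/n)^{3/2})$, so altogether $\lambda/n = O(\max\{n^{-1/2},\, (k/n)^{3/2}\})$. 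Taking the minimum of these three upper bounds on $\lambda/n$ yields exactly the three-term minimum defining $h(n,k)$, so $\lambda/n = O(h(n,k))$.

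I expect the main (though still routine) obstacle to be bookkeeping rather than mathematics: confirming that the middle term of $h$ genuinely equals the Pyber contribution $\max\{k^{1/2}n^{-3/4}, (k/n)^2\}$ under the running assumption $k \le (n-1)/2$, and checking that no constant factor or edge case spoils the clean form. In particular one should note that in the second branch of Theorem~\ref{thm:lambda} the factor $\mu - 1$ rather than $\mu$ appears, but since we only need an $O(\cdot)$ statement and $\mu - 1 \le \mu = O(k^2/n)$, this causes no difficulty. One should also briefly remark that, exactly as for the function $g$ in Table~\ref{table:g}, $h(n,k)$ is continuous in $k$, so the transitions between the regimes where Spielman, this paper, and Pyber dominate are seamless up to constant factors. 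With these checks in place the theorem follows immediately from Theorems~\ref{thm:lambda}, \ref{thm:spielman-params}(b), and \ref{thm:pyber-params}(b) together with Proposition~\ref{prop:basic-srg}(i), exactly as the preceding theorem on $r/n$ followed from the analogous ingredients.
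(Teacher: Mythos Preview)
Your proposal is correct and matches the paper's approach exactly: the paper treats this theorem as immediate from Theorems~\ref{thm:lambda}, \ref{thm:spielman-params}(b), and \ref{thm:pyber-params}(b) together with the estimate $\mu = O(k^2/n)$ from Proposition~\ref{prop:basic-srg}(i), just as the preceding theorem on $r/n$ was. One harmless slip: your aside that $k^2/n = O(n^{1/4}k^{1/2})$ ``exactly when $k \le n^{3/2}$'' should read $k \le n^{5/6}$, but since you explicitly keep the $\mu$ term separate anyway, this does not affect the argument.
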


\begin{table}[h!]
\caption{Piecewise description of the function $h(n,k)$ giving the best known
bounds on $\lambda/n$}
\begin{centering}
\begin{tabular}{lcr}
    Value & Parameter range & Source \\
    \hline \\ [-4pt] 
    $(k/n)^{4/3}$ & $k\le n^{5/8}$ & Spielman~\cite{spielman} \\
    $n^{-1/2}$ & $n^{5/8}\le k\le n^{2/3}$ & this paper \\
    $(k/n)^{3/2}$ & $n^{2/3} \le k \le n^{3/4}$ & this paper \\
    $k^{1/2}n^{-3/4}$ & $n^{3/4} \le k \le n^{5/6}$ & Pyber~\cite{pyber} \\
    $(k/n)^{2}$ & $k \ge n^{5/6}$ & Pyber~\cite{pyber}
\end{tabular}

\end{centering}
\end{table}

\section{Connection to graph isomorphism testing}\label{sec:iso}

The key motivation for our main result comes from its application 
to the complexity of graph isomorphism testing (GI).  While strong
theoretical evidence suggests that this problem is not NP-complete,
the worst-case bound of $\exp(\wto(\sqrt{n}))$, established three
decades ago~\cite{bl-canonical,zemlyachenko,bkantorluks},
continues to be unchallenged.  

Strongly regular graphs have long been recognized as a difficult
although probably not complete class for GI; there has been
slightly more progress on the complexity of testing their
isomorphism.  The first bound for \sr\ graphs was
$\exp(\wto(n^{1/2}))$~\cite{bab-srg} (1980), followed by
$\exp(\wto(n^{1/3}))$~\cite{spielman} (Spielman, 1996) and
$\exp(\wto(n^{1/5}))$~\cite{bcstw-sr} (2013).  The two main components
of the recent result are an $\exp(n^{O(\mu +\log n)})$ bound and an
$\exp(n^{\wto(1+\lambda/\mu)})$ bound.  While under Neumaier's claw
bound, the value of $\mu$ is asymptotically determined by $n$ 
and $k$ ($\mu\sim k^2/n$, see Theorem~\ref{thm:spielman-params}~(d)), 
the value of $\lambda$ can vary
widely, thus the significance of an improved bound on $\lambda$
that contributed to reducing the exponent of the exponent to
$1/5$.

\section{Conclusion and open problems}\label{sec:conclusion}

We have derived a new bound on the parameter $\lambda$ of sub-amply regular
graphs, and hence for distance-regular graphs.  In the particular case of \sr\
graphs, the improved bound contributed to the improved complexity estimate
for testing isomorphism of \sr\ graphs~\cite{bcstw-sr}. Our proof relies on 
Metsch's clique geometry when $k\mu = o(\lambda^2)$.

Examples of this clique structure arise in geometric \sr\ graphs, in
particular in point-graphs of partial geometries, including
Steiner designs and their duals.

We are not aware of infinite families of sub-amply regular graphs 
satisfying $k\mu = o(\lambda^2)$ which are not in fact point-graphs of
geometric 1-designs.  If such families do not exist, this would considerably
strengthen the conclusion of Theorem~\ref{thm:clique-asymptotic}.

In fact, we are not aware of even a single non-geometric sub-amply regular
graph satisfying inequality~\eqref{eq:main}.

We note that if any examples of non-geometric strongly regular graphs
satisfying inequality~\eqref{eq:main} exist, they will be rather large. 
No example has fewer than 1500 vertices; this was verified by checking all
feasible parameters of strongly regular graphs in the table compiled by Andries
Brouwer~\cite{srg-table}.

\bibliographystyle{amsplain}
\bibliography{delsarte}
\end{document}